\documentclass[letterpaper,twoside,noamsfonts,11pt]{amsart}

\pdfoutput=1

\usepackage{michael-org}
\usepackage{textcomp}
\usepackage{verbatim}
\usepackage{float}

\usetikzlibrary{decorations, calc, intersections, through, positioning, math}
\usetikzlibrary{decorations.markings}
\usetikzlibrary{pgfplots.fillbetween}

\addbibresource{references.bib}
\DeclareFieldFormat{eprint:ResearchGate}{Available via \printfield{eprinttype}:   \href{https://www.researchgate.net/publication/\strfield{eprint}}{\strfield{eprint}}}

\hypersetup{
}

\allowdisplaybreaks

\pgfplotsset{compat=1.16}

\begin{document}
  \title{Fourier restriction to smooth enough curves}
  \author{Michael Jesurum}
  \address{Department of Mathematics, University of Wisconsin, Madison, WI 53706}
  \email{jesurum@wisc.edu}
  \begin{abstract}
    We prove Fourier restriction estimates to arbitrary compact \(C^{N}\) curves
    for any \(N > d\) in the (sharp) Drury range, using a power of the
    affine arclength measure as a mitigating factor. In particular, we make no
    nondegeneracy assumption on the curve.
  \end{abstract}
  \maketitle
  \section{Introduction} \label{introduction section}
%
The boundedness of restriction operators
\(\mathcal{R} \colon L^{p}(\mathbb{R}^{d})
\rightarrow L^{q}(\gamma; \mathrm{d} \sigma)\) associated with curves
\(\gamma \colon \mathbb{R} \rightarrow \mathbb{R}^{d}\) has been studied for
decades. The first natural choice of measure \(\mathrm{d} \sigma\) is the
Euclidean arclength measure. In this case the maximal range of \(p\) and \(q\)
for which a restriction operator exists depends on the order of vanishing of
the torsion.
\begin{definition}
  For a compact interval \(I \subset \mathbb{R}\) and a curve
  \(\gamma \in C^{d}(I; \mathbb{R}^{d})\), define the \textbf{torsion}
  \(\tau(t) = |\det[\gamma'(t), \dots, \gamma^{(d)}(t)]|\). Furthermore, for
  \(\epsilon \geq 0\) define the weight
  \begin{equation} \label{affine arclength weight}
    w_{\epsilon}(t) = \tau(t)^{\frac{2}{d(d + 1)} + \epsilon}.
  \end{equation}
\end{definition}
In particular, \(\omega_{0}(t)\mathrm{d}t\) is the well-studied affine arclength
measure, which is also interesting due to the affine invariance of the problem.
See~\cite{Guggenheimer} for background on affine geometry. Using the affine
arclength measure, Drury~\cite{Drury} discovered a proof for the optimal range of
\(p\) and \(q\) in the least-degenerate case: when \(\gamma\) is the moment curve
\(\gamma(t) = (t, t^{2}, \dots, t^{d})\). For various classes of curves, many
authors (see below for the history) have shown that the affine arclength
measure compensates for vanishing of the torsion, so restriction
operators exist for nearly the same range of \(p\) and \(q\) as the moment curve.
Several authors have also used the overdamped affine arclength measure,
\(\epsilon > 0\) in~\eqref{affine arclength weight}, to attain the exact range of
\(p\) and \(q\) for the moment curve. The case of a general curve
\(\gamma \in C^{\infty}(I)\) has long
been expected to behave similarly. Building on techniques
in~\cite{ChenFanWang, DendrinosWright, Drury, Stovall}, our main result establishes
the boundedness of restriction operators for arbitrary compact curves that are
smooth enough. In the theorem,
\(C^{N} \coloneqq C^{\lfloor N\rfloor, N-\lfloor N\rfloor}\).
\begin{theorem} \label{restriction theorem}
  Let \(d \geq 2\), \(I \subset \mathbb{R}\) be a compact interval,
  \(N \in \mathbb{R}\) such that \(N > d\), and 
  \(\gamma \in C^{N}(I; \mathbb{R}^{d})\). For \(\epsilon \geq 0\) let
  \(w_{\epsilon}\) be the weight defined in~\eqref{affine arclength weight}.
  Let
  \begin{equation} \label{restriction theorem p and q}
    1 \leq p < \frac{d^{2} + d + 2}{d^{2} + d}
    \quad \text{and} \quad
    \begin{cases}
      1
        \leq q
        \leq \frac{2}{d^{2}+d}p'
        \quad & \text{if } \epsilon > \sum_{j=1}^{d} \frac{1}{N-j},
      \\
      1
        \leq q
        < \frac{\frac{2}{d^{2}+d}+\epsilon}
          {1 + \frac{d^{2}+d}{2}\sum_{j=1}^{d} \frac{1}{N-j}}p'
        \quad & \text{if }
          0 \leq \epsilon \leq \sum_{j=1}^{d} \frac{1}{N-j}.
    \end{cases}
  \end{equation}
  Then there is \(C = C(I, d, \gamma, N, p, q, \epsilon) > 0\) such
  that for any \(f \in L^{p}(\mathbb{R}^{d})\),
  \begin{equation*}
    \bigg(\int_{I} |\hat{f}(\gamma(t))|^{q} w_{\epsilon}(t)
      \mathrm{d} t\bigg)^{\frac{1}{q}}
    \leq C \|f\|_{L^{p}(\mathbb{R}^{d})}.
  \end{equation*}
\end{theorem}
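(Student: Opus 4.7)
My plan is to adapt the Drury--Dendrinos--Wright--Stovall framework to the finite-regularity setting, with the \(C^N\) assumption entering quantitatively through a Whitney-type decomposition and a Taylor comparison to polynomial curves. By duality and the standard \(TT^*\) iteration as in~\cite{Drury}, the desired inequality reduces to a \(d\)-fold convolution / Jacobian estimate for the map
\begin{equation*}
  \Phi(t_1,\dots,t_d) = \gamma(t_1) + \cdots + \gamma(t_d),
  \qquad
  \det D\Phi(t) = \det[\gamma'(t_1),\dots,\gamma'(t_d)],
\end{equation*}
namely that \(|\det D\Phi|^{-s}\) is integrable against a product of the weights \(w_\epsilon(t_i)^a\) for appropriate exponents \(s, a\) determined by \(p, q, \epsilon\). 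For the moment curve this is a Vandermonde computation, while for more general polynomial curves Dendrinos--Wright established a lower bound of the form \(|\det D\Phi(t_1,\dots,t_d)| \gtrsim |V(t_1,\dots,t_d)| \cdot \tau(t^*)\) after a suitable localization.

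Next, I would decompose \(I\) into countably many good subintervals \(J\) on which \(\tau\) is comparable to a dyadic scale and \(\gamma\) is well-approximated by its degree-\(d\) Taylor polynomial \(P_J\). Since \(\gamma \in C^N\) with \(N > d\), Taylor's theorem gives
\begin{equation*}
  |\gamma^{(j)}(t) - P_J^{(j)}(t)| \lesssim \|\gamma\|_{C^N}\, \ell_J^{N-j},
  \qquad 0 \leq j \leq d,
\end{equation*}
where \(\ell_J = |J|\). Applying the polynomial-curve estimate to \(P_J\) and absorbing the discrepancy between \(\det D\Phi_\gamma\) and \(\det D\Phi_{P_J}\) as a perturbation yields a per-interval bound. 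The affine arclength weight ensures that the implicit constants are uniform across dyadic scales, so the per-interval estimates sum to a global estimate on \(I\).

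The finite regularity then surfaces in the summation step as a geometric series in \(\ell_J\), with the loss at the \(j\)-th derivative carrying exponent \(N-j\); telescoping these losses across all \(j = 1, \dots, d\) produces the threshold \(\sum_{j=1}^d 1/(N-j)\) appearing in~\eqref{restriction theorem p and q}, and the parameter \(\epsilon\) is precisely what is needed to absorb this loss. When \(\epsilon > \sum_{j=1}^d 1/(N-j)\) the errors are summable at the critical exponents and we recover the full Drury range; otherwise, the admissible range of \(q\) shrinks by exactly the displayed factor, with equality of the two cases at \(\epsilon = \sum_{j=1}^d 1/(N-j)\) (as is easily verified from~\eqref{restriction theorem p and q}). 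The main obstacle will be the perturbation in the second step: the Dendrinos--Wright Jacobian bound exploits the polynomial structure in a nontrivial way, so one must either rework their proof to use only \(d\) derivatives or formulate a version robust enough that \(O(\ell_J^{N-j})\) errors in the \(j\)-th derivative can be inserted without destroying the factorization. Carefully tracking how this perturbation propagates through the exponents is what determines the precise form of the constraints on \(q\).
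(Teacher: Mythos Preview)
Your outline captures the high-level architecture correctly---decompose, apply a Drury-type estimate on each piece, then sum---but there are two genuine gaps that, as stated, would prevent the argument from closing.

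First, the origin of the exponent \(\sum_{j=1}^{d} \frac{1}{N-j}\). You attribute it to Taylor remainder terms ``telescoping'' through the perturbation, but that is not where it lives. In the summation step the decisive quantity is the \emph{number} \(N_k\) of good intervals at dyadic torsion level \(2^{-k}\); this is what gets raised to a power and summed in \(k\). Controlling only \(\tau = L_d^\gamma \approx 2^{-k}\) is not enough to bound \(N_k\), nor is it enough to run the Dendrinos--Wright Jacobian lower bound, which requires \emph{every} minor \(L_{\sigma,j}^{\gamma}\), \(1 \le j \le d\), to be single-signed and of comparable size on the interval. The paper therefore performs a \(d\)-fold sublevel decomposition (via the Chen--Fan--Wang lemma) on the functions \(L_{\sigma,j}^{\gamma}\); since \(L_{\sigma,j}^{\gamma} \in C^{N-j}\), each layer contributes at most \(C\,2^{k/(N-j)}\) intervals, and the product over \(j\) gives \(N_k \lesssim 2^{k\sum_{j}1/(N-j)}\). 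Your Taylor-perturbation picture does not produce this interval count, and without it the sum over \(k\) diverges.

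Second, the Drury bootstrap is an induction over the family of \emph{offspring curves} \(\gamma_h(t) = m^{-1}\sum_i \gamma(t+h_i)\), so the geometric inequality and injectivity must hold \emph{uniformly} for every \(\gamma_h\), not just for \(\gamma\). Your proposal never mentions this family. In the paper this is handled by a secondary decomposition: on each interval from the first step one uses the degree-\(\lfloor N\rfloor\) Taylor polynomial (this is where your approximation idea does enter) together with Stovall's lemma for polynomial curves to cut into \(O_N(1)\) further pieces on which the minors of \emph{all} offspring curves are simultaneously controlled. So the Taylor comparison is indeed part of the proof, but its role is to transfer uniformity to offspring curves, not to generate the threshold \(\sum_j 1/(N-j)\).
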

\begin{remark}
  When \(\gamma \in C^{\infty}(I)\), Theorem~\ref{restriction theorem}
  gives a restriction bound for \(q\) on the scaling line
  \(q = \frac{2}{d^{2}+d}p'\) in the overdamped case with any \(\epsilon > 0\)
  and for all \(q < \frac{2}{d^{2}+d}p'\) with the affine arclength measure.
\end{remark}
\begin{remark}
  In the case \(\epsilon \leq \sum_{j = 1}^{d} \frac{1}{N - j}\), the range of
  \(q\) in~\eqref{restriction theorem p and q} is empty whenever
  \begin{equation*}
    1
    < \frac{\frac{2}{d^{2} + d} + \epsilon}
      {1 + \frac{d^{2} + d}{2}\sum_{j = 1}^{d} \frac{1}{N - j}}p'.
  \end{equation*}
  Thus, to obtain restriction estimates for all \(p\) in the range
  \(1 \leq p < \frac{d^{2} + d + 2}{d^{2} + d}\), we need
  \begin{equation} \label{N epsilon interplay for Drury range}
    \sum_{j = 1}^{d} \frac{1}{N - j}
    \leq \frac{4}{(d^{2} + d)^{2}} + \frac{\epsilon(d^{2} + d + 2)}{d^{2} + d}.
  \end{equation}
  If \(N\) is much larger than \(d\),
  then~\eqref{N epsilon interplay for Drury range} is true even in the undamped
  case \(\epsilon = 0\). When \(N\) is closer to \(d\), there is always some
  \(0 < \epsilon_{0} < \sum_{j = 1}^{d} \frac{1}{N - j}\) such
  that~\eqref{N epsilon interplay for Drury range} holds for all
  \(\epsilon \geq \epsilon_{0}\). See Figure~\ref{fig: range of p and q} below,
  which uses the extension operator instead of the restriction operator for
  visual clarity.
\end{remark}
\begin{figure}[H]
  \centering
  \begin{tikzpicture}[xscale = 11.92, yscale = 11.92]
    \draw[->] (0, 0) -- (1.02, 0) node [right] {\(\frac{1}{q'}\)};
    \draw[->] (0, 0) -- (0, .3) node [above] {\(\frac{1}{p'}\)};
    \draw[dashed] 
    (0, {1/4}) node [left] {\(\frac{2}{d^{2}+d+2}\)} -- (1, {1/4});
    \draw[dashed] (1, 0) node [below] {1} -- (1, {1/4});
    \foreach \n in {160, ..., 660}{
      \tikzmath{
        \hue = (660-\n)/20+55;
      }
      \draw[white!\hue!darkgray] (1, 0) -- (0, {\n/2640});
    }
    \foreach \n in {0, ..., 500}{
      \tikzmath{
        \hue = (500-\n)/10+5;
      }
      \draw[white!\hue!darkgray] (1, 0) -- ({\n/2000}, {1/4});
    }
    \fill[white!85!darkgray] (1, 0) -- (0, 0) -- (0, {2/33}) -- cycle;
    \draw[dashed] (1, 0) -- (0, {2/33}) node[left] {\(\epsilon = 0\)};
    \draw[dashed] (1, 0) -- node[below, rotate = {atan(-1/4)}]
      {``='' in \eqref{N epsilon interplay for Drury range}} (0, {1/4});
    \draw (1, 0) -- node[above, rotate = {atan(-1/3)}]
      {\(\epsilon > \sum_{j=1}^{d} \frac{1}{N-j}\)} ({1/4}, {1/4});
  \end{tikzpicture}
  \caption{\small \textit{Range of \(q'\) and \(p'\) for which the
  extension operator associated with an arbitrary \(\gamma \in C^{d+1}\) is
  bounded from \(L^{q'}\) to \(L^{p'}\), with shading indicating the amount of
  damping required.}}
  \label{fig: range of p and q}
\end{figure}
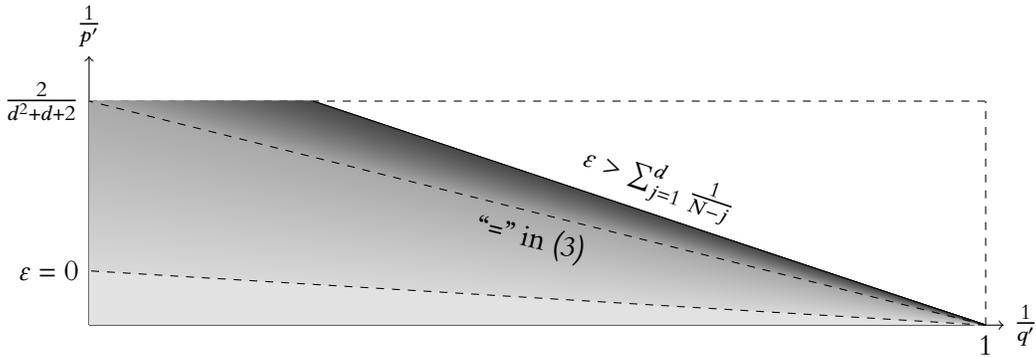
In the case of the moment curve, where the torsion is constant, the offspring
curve method is available to prove optimal restriction estimates. That method
includes an analysis of the function
\begin{equation} \label{sum of gammas}
  \Phi_{\gamma}(t_{1}, \dots, t_{d}) = \gamma(t_{1})+\dots+\gamma(t_{d}).
\end{equation}
When \(\gamma\) is the moment curve, the Jacobian \(J_{\Phi_{\gamma}}\) is (up
to a multiplicative constant) equal to the Vandermonde determinant
\begin{equation} \label{vandermonde}
  v(t_{1}, \dots, t_{d}) = C_{d}\prod_{1 \leq i < j \leq d} (t_{j}-t_{i}).
\end{equation}
To prove Theorem~\ref{restriction theorem}, we apply the Drury method to
intervals on which the torsion of \(\gamma\) is comparable to some dyadic
value. We also need to ensure that \(\Phi_{\gamma}\) is well-behaved on each
interval. The main difficulty is controlling the number of intervals we
study. This is the content of the following theorem:
\begin{theorem} \label{geometric theorem}
  Let \(I \subset \mathbb{R}\) be a compact interval. If
  \(N \in \mathbb{R}\) with \(N > d\) and \(\gamma \in C^{N}(I; \mathbb{R}^{d})\),
  then there is a family of intervals \(I_{k, l}\) such that
  \begin{equation} \label{interval decomposition}
    I
    = \{t \in I : \tau(t) = 0\}
      \cup \bigcup_{k \geq C_{\gamma}} \bigcup_{l = 1}^{N_{k}} I_{k, l},
  \end{equation}
  \begin{equation*} 
    2^{-k-2} \leq \tau(t) \leq 2^{-k+1} \text{ for } t \in I_{k, l},
  \end{equation*}
  \begin{equation} \label{injective}
    (t_{1}, \dots, t_{d}) \mapsto \Phi_{\gamma}(t_{1}, \dots, t_{d})
    \text{ is 1-to-1 for } t_{1} < \dots < t_{d} \in I_{k, l},
  \end{equation}
  \begin{equation} \label{geometric inequality}
    |J_{\Phi_{\gamma}}(t_{1}, \dots, t_{d})|
    \geq C_{d}2^{-k}|v(t_{1}, \dots, t_{d})|
    \text{ for } (t_{1}, \dots, t_{d}) \in (I_{k, l})^{d},
  \end{equation}
  \begin{equation*} 
    N_{k} \leq C_{d, N, \gamma}2^{k\sum_{j=1}^{d}\frac{1}{N-j}}, \text{ and}
  \end{equation*}
  \begin{equation*}
    \sum_{l=1}^{N_{k}}|I_{k, l}|
    \leq C_{d, N, \gamma}(k+C_{d, \gamma})^{d}|I|.
  \end{equation*}
\end{theorem}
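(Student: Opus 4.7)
My plan is to combine a dyadic decomposition of \(I\) by the torsion with an iterative refinement that controls \(\Phi_{\gamma}\). I would introduce auxiliary ``partial torsions'' \(\tau_{j}(t)\) for \(j = 1, \dots, d\) built from the first \(j\) derivatives of \(\gamma\) (so that \(\tau_{d} = \tau\)), satisfying \(\tau_{j} \in C^{N-j}(I)\), and produce the decomposition of \(I\) by iteratively refining on dyadic levels of each \(\tau_{j}\) in turn, with stage \(j\) exploiting the \(C^{N-j}\) regularity of \(\tau_{j}\).

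For the Jacobian bound \eqref{geometric inequality}, I would write \(J_{\Phi_{\gamma}}(t_{1}, \dots, t_{d}) = \det[\gamma'(t_{1}), \dots, \gamma'(t_{d})]\) and reduce, by repeated column differences and the fundamental theorem of calculus, to an expression of the form \(v(t_{1}, \dots, t_{d})\) times a determinant of iterated divided differences of \(\gamma'\). A Hermite--Genocchi identity then writes the \(j\)th column of this reduced matrix as an average of \(\gamma^{(j)}\) over a simplex in \(\{t_{1}, \dots, t_{j}\}\), and on an interval where the \(\tau_{j}\) lie in controlled dyadic windows the resulting inner determinant is comparable to \(\tau \asymp 2^{-k}\). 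This yields \(|J_{\Phi_{\gamma}}| \geq C_{d} 2^{-k} |v|\); injectivity \eqref{injective} then follows from the sign-definiteness of \(J_{\Phi_{\gamma}}\) on the ordered simplex together with an induction on \(d\).

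For the interval count, the key input is that a function \(f \in C^{m}(I)\) with \(\|f^{(m)}\|_{\infty}\) bounded cannot swing by \(2^{-k}\) across an interval of length much less than \(2^{-k/m}\), since such a swing forces \(\|f^{(m)}\|_{\infty} \geq c \cdot 2^{-k}/\ell^{m}\) (Taylor--Markov). Applied to \(\tau_{j}\) with \(m = N - j\), this says each refinement at stage \(j\) creates at most \(O(2^{k/(N-j)})\) new subintervals per existing one, and multiplying over \(j = 1, \dots, d\) gives \(N_{k} \leq C \, 2^{k \sum_{j=1}^{d} 1/(N-j)}\). The measure bound will arise because each refinement stage covers \(I\) with multiplicity \(O(k + C_{d, \gamma})\), reflecting the \(O(k)\) dyadic sub-scales of \(\tau_{j}\) below \(2^{-k}\) that must be tracked for compatibility with later stages; over \(d\) stages this compounds to multiplicity \(O((k + C_{d, \gamma})^{d})\), and hence \(\sum_{l} |I_{k, l}| \leq C (k + C_{d, \gamma})^{d} |I|\).

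The main obstacle will be orchestrating the iterative refinement so that on the final family \(\{I_{k, l}\}\) the torsion range, injectivity, and Jacobian estimate all hold simultaneously while the counting and measure bounds are preserved. At each stage one must choose the refinement to be compatible with the Hermite--Genocchi structure used to control \(J_{\Phi_{\gamma}}\), and verify that the Taylor--Markov oscillation bound for \(\tau_{j}\) genuinely applies on the (progressively shorter) intervals produced at earlier stages; keeping these multiple constraints consistent across all \(d\) stages, without losing more than the claimed factors, is the heart of the argument.
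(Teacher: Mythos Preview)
Your decomposition and counting strategy is essentially the paper's: it introduces the minors \(L_{\sigma,j}^{\gamma}(t)=\det[\gamma_{\sigma(i)}^{(l)}(t)]_{1\le i,l\le j}\) (your \(\tau_j\)), iteratively refines \(I\) into dyadic level sets of each \(L_{\sigma,j}^{\gamma}\) using its \(C^{N-j}\) regularity via a lemma of Chen--Fan--Wang, and obtains the product count \(\prod_j O(2^{k/(N-j)})\) and the measure bound \(O((k+C_\gamma)^d|I|)\) by exactly your Taylor--Markov and multiplicity reasoning. One point you gloss over: for a general curve the partial torsions with the \emph{identity} permutation can vanish even where \(\tau\neq 0\) (e.g.\ \(\gamma_1'\) may vanish), so the lower bound \(\tau_j\gtrsim\tau\) needed to cap the number of dyadic sub-scales at \(O(k)\) is false as stated. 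The paper fixes this with a cofactor-expansion pigeonhole showing that at each point some permutation \(\sigma\) gives \(|L_{\sigma,j}^{\gamma}|\gtrsim_\gamma|L_d^{\gamma}|\), and then takes a union over \(\sigma\in S_d\) at cost \(d!\).

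The substantive gap is the Jacobian step. Your Hermite--Genocchi reduction correctly gives \(J_{\Phi_\gamma}=v\cdot\det M\) with the \(j\)th column of \(M\) an average of \(\gamma^{(j)}\) over a simplex, but the assertion that \(\det M\asymp 2^{-k}\) once the \(\tau_j\) sit in dyadic windows is exactly the hard part, and a pointwise bound on the integrand \(\det[\gamma'(s_1),\dots,\gamma^{(d)}(s_d)]\) fails on intervals of uncontrolled length (already for \(d=2\), writing \(r=\gamma_2'/\gamma_1'\) produces an error term \(\gamma_1'(s_1)(r(s_2)-r(s_1))\gamma_1''(s_2)\) that can dominate). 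The paper instead uses the Dendrinos--Wright identity: an exact formula \(J_{\Phi_\gamma}=\mathfrak I_d\), where \(\mathfrak I_m\) is built inductively from products of the single-signed ratios \(L_{j-1}L_{j+1}/L_j^{2}\) at the \(t_i\) times an integral of \(\mathfrak I_{m-1}\); since every factor and every integrand is single-signed with size pinned by the dyadic windows on the \(L_j\), the bound \(|\mathfrak I_d|\approx 2^{-k}|v|\) follows by a clean induction with no smallness hypothesis on \(|I_{k,l}|\). Relatedly, injectivity does \emph{not} follow from sign-definiteness of \(J_{\Phi_\gamma}\) alone: the paper uses Steinig's argument, which requires the Jacobian lower bound for every truncation \((\gamma_1,\dots,\gamma_n)\), \(1\le n\le d\), and hence genuinely needs control of all the \(L_{\sigma,j}^{\gamma}\), not just \(\tau=L_d^{\gamma}\). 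Your ``induction on \(d\)'' would have to be an induction on truncation dimension feeding on this full family of bounds.
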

\subsection*{History}
%
  %
  %
  In~\cite{Stein1}, Stein traces the roots of Fourier restriction theory
  to observations about the continuity of Fourier transforms of radial
  functions. Laurent Schwartz and many others independently observed that if
  \(1 \leq p < \frac{2d}{d+1}\) and \(f \in L^{p}(\mathbb{R}^{d})\) is a radial
  function, then \(\hat{f}(r)\) is continuous for all \(0 < r < \infty\).
  Thus, \(\hat{f}\) can be thought of as a function on \(S^{d-1}\), even
  though that set has measure 0. Stein wondered if one could give a similar
  statement about Fourier restrictions of nonradial functions. He successfully
  proved such a result in 1967, but did not publish because it was unclear
  what purpose such a lemma would have.

  In 1970, Fefferman~\cite{Fefferman}, in collaboration with Stein, improved
  on Stein's lemma in 2 dimensions and showed that the \(n\)-dimensional
  lemma could be used to make progress on the multiplier problem for the ball.
  Interest in the restriction problem picked up, and by 1974 the case of
  curves in two dimensions had largely been solved by
  Zygmund~\cite{Zygmund}, H{\"ormander}~\cite{Hormander}, and
  Sj{\"o}lin~\cite{Sjolin}. While Zygmund and H{\"ormander} dealt with
  nondegenerate curves (\(\tau \neq 0\)), Sj{\"o}lin brought in the measures
  \(w_{\epsilon}(t)\mathrm{d}t\) to understand curves with vanishing curvature.
  The case of \(d = 2\) and \(\gamma \in C^{\infty}\) in
  Theorem~\ref{restriction theorem} is due to Sj{\"o}lin~\cite{Sjolin}.
  There have been several more
  papers~\cite{Barcelo2, Barcelo1, Fraccaroli, Oberlin2, Oberlin3, Ruiz, Sogge}
  that have answered some remaining questions in 2 dimensions. In the most
  recent of these (in 2021), Fraccaroli~\cite{Fraccaroli} proved a restriction
  theorem in the optimal range for all continuous convex curves, which is
  the first result that did not require \(C^{2}\).

  Unfortunately, the techniques that work well in two dimensions do not
  carry over to higher dimensions. Thus, it was a few more years before any
  results were known. Prestini broke into the high-dimensional setting by
  proving a restriction theorem for curves with nonvanishing torsion in
  1978~\cite{Prestini1} for \(d = 3\) and 1979~\cite{Prestini2} for
  \(d \geq 4\). However, her range of \(p\) was not sharp: it was
  \(1 \leq p < \frac{d^{2}+2d}{d^{2}+2d-2}\) (compare
  with~\eqref{restriction theorem p and q}). She also did not attain bounds
  on the scaling line \(q = \frac{2}{d^{2}+d}p'\), which is seen to be the largest
  possible value of \(q\) by inspecting Knapp examples. In 1982, Christ~\cite{Christ2}
  extended Prestini's theorem to include the scaling line, and then in
  1985~\cite{Christ} he provided restriction estimates for curves of finite type
  with the same range of \(p\) and for \(q\) up to the scaling line. Furthermore,
  those bounds included \(q\) on the scaling line in a restricted range of \(p\).
  The range \(1 \leq p < \frac{d^{2} + 2d}{d^{2} + 2d - 2}\) is called the
  Christ-Prestini range. See also~\cite{Ruiz} for the first result for a
  curve with vanishing torsion.

  At a similar time that Christ was beginning the study of curves with finite
  type, Drury~\cite{Drury} was concluding the study of curves with
  nonvanishing torsion. He proved a restriction theorem for nondegenerate
  \(C^{d}\) curves in the optimal range
  \(1 \leq p < \frac{d^{2} + d + 2}{d^{2} + d}\) and
  \(1 \leq q \leq \frac{2}{d^{2} + d}p'\). Optimality of the range of \(p\) is due to
  Arkhipov, Karacuba, and {\v C}ubarikov~\cite{ArkhipovKaratsubaChubarikov} (see
  also~\cite{ArkhipovKaratsubaChubarikov2}). Further results for nondegenerate
  curves appear in~\cite{BakLee, BakOberlin}.
  
  Shortly thereafter, Drury and Marshall~\cite{DruryMarshall1, DruryMarshall2}
  improved the known estimates for curves of finite type, and then in 1990
  Drury~\cite{Drury2} further improved the results for curves
  \(\gamma(t) = (t, t^{2}, t^{k})\), \(k \geq 4\).

  Little further progress was made for curves with vanishing torsion in higher
  dimensions until 2008, when Bak, Oberlin, and
  Seeger~\cite{BakOberlinSeeger1} solved the monomial curve case. In that and
  a subsequent paper~\cite{BakOberlinSeeger2}, they also obtained endpoint
  results. Shortly thereafter, Dendrinos and M{\"u}ller~\cite{DendrinosMuller}
  obtained results for pertubed monomial curves. General polynomial curves
  were covered for a restricted range of \(p\) by Dendrinos and
  Wright~\cite{DendrinosWright}, and then for the full range of \(p\) by
  Stovall~\cite{Stovall}. See also~\cite{HamLee} for results with general measures.
  
  In addition to monomial curves, Bak, Oberlin, and
  Seeger~\cite{BakOberlinSeeger3} also proved restriction
  theorems for simple curves
  \begin{equation*} 
    \gamma(t) = (t, t^{2}, \dots, t^{d-1}, \phi(t))
  \end{equation*}
  such that \(\phi \in C^{d}\) with \(\phi^{(d)}\) satisfying a
  certain inequality. Chen, Fan, and Wang~\cite{ChenFanWang} were able to
  dispense with the inequality, but at the cost of enforcing
  \(\phi \in C^{N}\) for some \(N > d\). By a change of variables, the
  case of \(d = 2\) and \(\gamma \in C^{N}\) in
  Theorem~\ref{restriction theorem} is due to Chen, Fan, and
  Wang~\cite{ChenFanWang}. Another result
  of this nature appears in~\cite{Wan}.
  
  With the polynomial case solved, it is likely that an
  argument based on \(\epsilon\)-removal and polynomial approximation could be
  used to solve the general \(C^{\infty}\) case off the scaling line. Thus, the
  most interesting new consequences of Theorem~\ref{restriction theorem} are
  (in dimension \(d \geq 3\)) the scaling line estimates for
  \(C^{\infty}(\mathbb{R}^{d})\) curves with \(\epsilon > 0\) and the nontrivial
  range of \(p\) and \(q\) for \(C^{N}(\mathbb{R}^{d})\) curves with
  \(\epsilon = 0\).
\subsection*{Outline of proof}
  Section~\ref{drury section} uses Theorem~\ref{strong geometric theorem}, which is
  a stronger version of Theorem~\ref{geometric theorem}, as a black box to prove
  Theorem~\ref{restriction theorem}. It begins with a restriction
  result on each interval in the decomposition given by
  Theorem~\ref{strong geometric theorem}, and then combines these estimates into a
  restriction inequality on the whole interval.
  Sections~\ref{decomposition section} and~\ref{geometric inequality section}
  are devoted to a proof of Theorem~\ref{strong geometric theorem}.
  Section~\ref{decomposition section} constructs a decomposition
  for Theorem~\ref{strong geometric theorem} in two steps. The first
  step decomposes \(I\) into intervals on which \(\gamma\) is well-behaved, and the secondary decomposition
  creates intervals where certain auxiliary curves are similarly well-behaved.
  Finally, Section~\ref{geometric inequality section} finishes the proof
  of the geometric inequality~\eqref{geometric inequality} and the
  condition~\eqref{injective} on each interval in the decomposition.
\subsection*{Notation}
  Let \(\tilde{I}\) be a compact interval, \(d \in \mathbb{N}\),
  \(N \in \mathbb{R}\) with \(N > d\), and
  \(\gamma \in C^{d}(\tilde{I}; \mathbb{R}^{d})\). These will remain fixed
  throughout this paper, and we will prove Theorem~\ref{restriction theorem}
  and Theorem~\ref{geometric theorem} with these fixed values. \(C\) denotes an
  arbitrary constant that may change line by line and is always allowed to depend
  on the dimension \(d\) and the interval \(\tilde{I}\). Any subscripts indicate
  additional dependence: for instance, \(C_{\gamma}\) is a constant that depends
  only on \(\gamma\), the dimension, and the original interval. For two numbers
  \(A\) and \(B\), write \(A \approx B\) if there exist constants \(C\) and \(C'\)
  such that
  \begin{equation*}
    CB \leq A \leq C'B.
  \end{equation*}
  Once again, subscripts indicate additional dependence. Logarithms are taken in base
  2 purely for the convenience of calculations in Section~\ref{decomposition section}.
\subsection*{Acknowledgements}
  The author would like to thank Betsy Stovall for suggesting this project and
  for advising throughout the process. This material is based upon work
  supported by the National Science Foundation under Grant No. DMS-2037851 and
  DMS-1653264.

  \section{Proof of Theorem~\ref{restriction theorem}} \label{drury section}
%
In this section, we use a strengthening of Theorem~\ref{geometric theorem} to
prove Theorem~\ref{restriction theorem}. The first step is to prove a restriction
estimate on each interval of the decomposition~\eqref{interval decomposition}.
Define the family of offspring curves
\begin{equation*}
  \Upsilon
  = \bigg\{\gamma_{h}(t) = \frac{1}{m} \sum_{j=1}^{m} \gamma(t+h_{j})
    : m \in \mathbb{N}, \ h \in \mathbb{R}^{m},
    \ 0 \leq h_{1} \leq \dots \leq h_{m}\bigg\}.
\end{equation*}
For an interval \(I = [a, b]\), set \(I_{h} = [a-h_{1}, b-h_{m}]\). We will use
induction to show that a restriction bound holds uniformly for
\(\gamma_{h} \in \Upsilon\).
\begin{proposition} \label{Drury argument}
  Let \(I \subseteq \tilde{I}\) be a compact interval and \(k \in \mathbb{Z}\).
  Suppose that for every \(\gamma_{h} \in \Upsilon\),
  \begin{equation} \label{injective Drury}
    (t_{1}, \dots, t_{d}) \mapsto \Phi_{\gamma_{h}}(t_{1}, \dots, t_{d})
    \text{ is 1-to-1 for } t_{1} < \dots < t_{d} \in I_{h} \text{ and}
  \end{equation}
  \begin{equation} \label{geometric inequality Drury}
    |J_{\Phi_{\gamma_{h}}}(t_{1}, \dots, t_{d})|
    \geq C2^{-k}|v(t_{1}, \dots, t_{d})|
    \text{ for } (t_{1}, \dots, t_{d}) \in I_{h}^{d}.
  \end{equation}
  Then for
  \begin{equation*}
    1 \leq p < \frac{d^{2}+d+2}{d^{2}+d}
    \quad \text{and}
    \quad q = \frac{2p'}{d^{2}+d},
  \end{equation*}
  we have the restriction inequality
  \begin{equation} \label{restriction bound Drury}
    \bigg(\int_{I_{h}} |\hat{f}(\gamma_{h}(t))|^{q}
      \mathrm{d} t\bigg)^{\frac{1}{q}}
    \leq 2^{\frac{k}{p'}}C_{p}\|f\|_{L^{p}(\mathbb{R}^{d})}
  \end{equation}
  for all \(f \in L^{p}(\mathbb{R}^{d})\) and all \(\gamma_{h} \in \Upsilon\).
\end{proposition}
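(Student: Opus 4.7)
My plan is to pass to the dual extension formulation, apply the classical Drury $d$-th power method to obtain an $L^2$ endpoint, and bootstrap via the offspring-curve induction indicated in the statement. By $L^p$--$L^{p'}$ duality, the inequality \eqref{restriction bound Drury} is equivalent to the extension bound
\[
  \|E_{\gamma_h} g\|_{L^{p'}(\mathbb{R}^d)} \leq 2^{k/p'} C_p \|g\|_{L^{q'}(I_h)},\qquad E_{\gamma_h} g(x) = \int_{I_h} g(t)\,e^{2\pi i x\cdot\gamma_h(t)}\,\mathrm{d}t,
\]
and it suffices to prove this uniformly over all $\gamma_h\in\Upsilon$. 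The workhorse is the algebraic identity
\[
  (E_{\gamma_h} g)^d(x) = \int_{I_h^d} \prod_{j=1}^d g(t_j)\,e^{2\pi i x\cdot\Phi_{\gamma_h}(t)}\,\mathrm{d}t,
\]
which expresses $(E_{\gamma_h} g)^d$ as the Fourier transform of the pushforward measure $(\Phi_{\gamma_h})_*\bigl(\prod_j g(t_j)\,\mathrm{d}t\bigr)$. Using the permutation-symmetry of $\Phi_{\gamma_h}$, the injectivity hypothesis \eqref{injective Drury} implies this pushforward has density $d!\prod_j|g(t_j)|/|J_{\Phi_{\gamma_h}}|$ on the ordered preimage. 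Hausdorff--Young with conjugate exponents $(d+1)/d$ and $d+1$, together with the change of variables back to $I_h^d$ and the Jacobian bound \eqref{geometric inequality Drury}, yields
\[
  \|E_{\gamma_h} g\|_{L^{d^2+d}} \leq C\,2^{k/(d^2+d)}\biggl(\int_{I_h^d}\frac{\prod_j|g(t_j)|^{(d+1)/d}}{|v(t)|^{1/d}}\,\mathrm{d}t\biggr)^{1/(d+1)},
\]
and Drury's classical Vandermonde-weighted inequality bounds the right-hand side by $C\|g\|_{L^2}$, closing the endpoint $(q',p')=(2,d^2+d)$.

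To reach the full range, Riesz--Thorin interpolation of this endpoint against the trivial $L^1\to L^\infty$ bound along the scaling line $q=2p'/(d^2+d)$ produces \eqref{restriction bound Drury} for $p'\in[d^2+d,\infty]$. To cover the remaining range $(d^2+d+2)/2<p'<d^2+d$, I would carry out the promised offspring-curve induction. Letting $A(h)$ denote the best constant in \eqref{restriction bound Drury} at a fixed scaling-line pair, the goal is a self-improving inequality $A(h)\leq C_p\sup_{h'}A(h')^{\theta}$ with $\theta<1$ and $h'$ a tuple of one greater length, derived from a convolution identity that rewrites $(E_{\gamma_h} g)^2$ as an integral of $E_{\gamma_{h'}}$-type expressions (via $(s,t)\mapsto((s+t)/2,(s-t)/2)$ turning $\gamma_h(s)+\gamma_h(t)$ into twice an offspring $\gamma_{h'}$ with one additional translate). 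Iterating this inequality and invoking the uniform validity of \eqref{injective Drury}--\eqref{geometric inequality Drury} across $\Upsilon$ closes the bound throughout the open Drury range.

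I expect the main obstacle to be the self-improving step: identifying the right convolution identity relating $\gamma_h$ to $\gamma_{h'}$, verifying that $\theta<1$ emerges with the correct exponents, and tracking the $2^{k/p'}$ factor as $p'$ varies along the scaling line. Since every $\gamma_h\in\Upsilon$ satisfies \eqref{injective Drury}--\eqref{geometric inequality Drury} with the same $k$, this factor can be carried cleanly through the iteration, and the final constant $C_p$ should depend only on $p$.
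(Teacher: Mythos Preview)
Your first step---Hausdorff--Young on $(E_{\gamma_h}g)^d$ with exponent $(d+1)/d$ followed by the multilinear Vandermonde inequality---does give the endpoint $(q',p')=(2,d^2+d)$, and interpolation with $L^1\to L^\infty$ then covers the easy half $p'\geq d^2+d$ of the range. The gap is in the hard half $(d^2+d+2)/2<p'<d^2+d$. The squaring mechanism you propose moves exponents in the wrong direction: expressing $(E_{\gamma_h}g)^2$ via offspring curves and applying Minkowski controls $\|E_{\gamma_h}g\|_{2p'}$ in terms of offspring-curve bounds at $p'$, which pushes $p'$ \emph{up}, away from the Drury endpoint. There is no evident way to extract a fixed-exponent self-improvement $A(h)\leq C\,(\sup_{h'}A(h'))^\theta$ with $\theta<1$ from a bilinear identity, and even granting one, you would still need a priori finiteness of $\sup_{h'}A(h')$ over the infinite family $\Upsilon$ to iterate. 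Your own caveat that this is ``the main obstacle'' is accurate: as written, this portion is not a proof.

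The paper's argument, following Drury, is structurally different. It is an induction on $q'$ starting from the trivial $q'=1$, and at each stage it uses the full $d$-th power. After the substitution $(t_1,\dots,t_d)\mapsto(t,h_2,\dots,h_d)$ the inner $t$-integral is extension along an offspring curve with $d-1$ additional shifts; one then proves \emph{two} bounds on the resulting Vandermonde-weighted operator $TG$: an $L^2\to L^2$ bound from Plancherel and the Jacobian hypothesis~\eqref{geometric inequality Drury}, and an $L^1_h L^{q_0'}_t\to L^{p_0'}$ bound from Minkowski plus the induction hypothesis at $(p_0',q_0')$. Interpolating these (together with the trivial $L^1 L^1\to L^\infty$ bound) in mixed norms and then exploiting $v(0,h)^{-1}\in L^{d/2,\infty}_h$ via H\"older produces a strictly larger admissible $q'$, closing the induction. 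The mechanism you are missing is precisely this interpolation of the induction hypothesis against the Plancherel $L^2$ bound on the $d$-fold operator---not a bilinear self-improvement at a fixed exponent.
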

\begin{proof}
  We adapt Drury's argument from~\cite{Drury}. By duality, it suffices to study
  the extension operator
  \begin{equation*}
    \mathcal{E}_{h}g(x) = \int_{I_{h}} e^{i\gamma_{h}(t) \cdot x}g(t) \mathrm{d}t.
  \end{equation*}
  We will show that
  \begin{equation*}
    \|\mathcal{E}_{h}g\|_{L^{p'}(\mathbb{R}^{d})}
    \leq 2^{\frac{k}{p'}}C_{p}\|g\|_{L^{q'}(I)},
  \end{equation*}
  for
  \begin{equation*}
    1 \leq q' < \frac{d^{2} + d + 2}{2},
    \quad \frac{d^{2} + d}{2p'} + \frac{1}{q'} = 1.
  \end{equation*}
  The proof is by induction on \(q'\). Hausdorff-Young shows that the base case
  \(q' = 1\) and \(p' = \infty\) is true. The induction hypothesis is that for
  some \(1 \leq q_{0}' < \frac{d^{2}+d+2}{d^{2}+d}\) and \(p_{0}'\) defined by
  \begin{equation*}
    \frac{d^{2}+d}{2p_{0}'}+\frac{1}{q_{0}'} = 1,
  \end{equation*}
  the following inequality holds uniformly for \(\gamma_{h} \in \Upsilon\):
  \begin{equation} \label{induction hypothesis}
    \|\mathcal{E}_{h}g\|_{L^{p_{0}'}(\mathbb{R}^{d})}
    \leq 2^{\frac{k}{p_{0}'}}C_{p_{0}}\|g\|_{L^{q_{0}'}(I)}.
  \end{equation}
  Fix \(\gamma_{\tilde{h}} \in \Upsilon\). For ease of notation, set
  \(\zeta = \gamma_{\tilde{h}}\), \(I = I_{\tilde{h}}\), and
  \(\mathcal{E} = \mathcal{E}_{\tilde{h}}\). To
  improve the bound~\eqref{induction hypothesis}, we first write
  \begin{equation*}
    \Big(\mathcal{E}g\Big(\frac{x}{d}\Big)\Big)^{d}
    = \bigg(\int_{I} e^{i\zeta(t) \cdot \frac{x}{d}}g(t) \mathrm{d}t\bigg)^{d}
    = \int_{I^{d}} e^{ix\cdot\frac{1}{d}\sum_{j=1}^{d} \zeta(t_{j})}
      \prod_{j=1}^{d} g(t_{j}) \mathrm{d}t_{1} \dots \mathrm{d}t_{d}.
  \end{equation*}
  Set
  \begin{equation*}
    A = \{(t_{1}, \dots, t_{d}) \in I^{d} : t_{1} < \dots < t_{d}\}.
  \end{equation*}
  By symmetry in \(t_{1}, \dots, t_{d}\),
  \begin{equation*}
    \Big(\mathcal{E}g\Big(\frac{x}{d}\Big)\Big)^{d}
    = d!\int_{A} e^{ix\cdot\frac{1}{d}\sum_{j=1}^{d} \zeta(t_{j})}
      \prod_{j=1}^{d} g(t_{j}) \mathrm{d}t_{1} \dots \mathrm{d}t_{d}.
  \end{equation*}
  With the change of variables
  \begin{equation*}
    t = t_{1},
    \quad h_{j} = t_{j} - t_{1} \text{ for } 1 \leq j \leq d,
  \end{equation*}
  and with \(B\) the image of \(A\) under this change of variables, we observe that
  \begin{equation*}
    \Big(\mathcal{E}g\Big(\frac{x}{d}\Big)\Big)^{d}
    = d!\int_{B} e^{ix\cdot\frac{1}{d}\sum_{j=1}^{d} \zeta(t+h_{j})}
      \prod_{j=1}^{d} g(t+h_{j})
      \mathrm{d}t \mathrm{d}h_{2} \dots \mathrm{d}h_{d}.
  \end{equation*}
  For fixed \(h_{2}, \dots, h_{d}\), each curve
  \begin{equation*}
    t \mapsto \frac{1}{d}\sum_{j = 1}^{d} \zeta(t + h_{j})
  \end{equation*}
  is an offspring curve in the family \(\Upsilon\). Let \(v\) be the Vandermonde
  determinant~\eqref{vandermonde} and define
  \begin{equation*}
    TG(x)
    = \int_{B} e^{ix\cdot\frac{1}{d}\sum_{j=1}^{d} \zeta(t+h_{j})}
      G(t, h) v(h) \mathrm{d}t \mathrm{d}h.
  \end{equation*}
  \begin{lemma}
    We have the bound
    \begin{equation} \label{induction bound}
      \|TG\|_{L^{p_{0}'}}
      \leq 2^{\frac{k}{p_{0}'}}C_{p_{0}}
        \|G\|_{L^{1}_{h'}(L^{q_{0}'}_{t}; |v(h)|)}.
    \end{equation}
  \end{lemma}
  \begin{proof}
    An application of Minkowski's inequality for integrals shows that 
    \begin{equation*}
      \|TG\|_{L^{p_{0}'}}
      \leq \int \bigg\|\int
        e^{ix\cdot\frac{1}{d}\sum_{j=1}^{d} \zeta(t+h_{j})}
        G(t, h) \mathrm{d} t\bigg\|_{L^{p_{0}'}(\mathrm{d} x)} |v(h)|
        \mathrm{d} h.
    \end{equation*}
    Employing the induction hypothesis~\eqref{induction hypothesis}, we obtain
    \begin{equation*}
       \|TG\|_{L^{p_{0}'}}
      \leq 2^{\frac{k}{p_{0}'}}C_{p_{0}}
        \int \|G(\cdot, h)\|_{L^{q_{0}'}(I_{h})} |v(h)| \mathrm{d} h.
    \end{equation*}
    The lemma now follows from the inequality
    \begin{equation*}
      2^{\frac{k}{p_{0}'}}C_{p_{0}}
        \int \|G(\cdot, h)\|_{L^{q_{0}'}(I_{h})} |v(h)| \mathrm{d} h
      \leq 2^{\frac{k}{p_{0}'}}C_{p_{0}}
        \|G\|_{L^{1}_{h'}(L^{q_{0}'}_{t}; |v(h)|)}.
      \qedhere
    \end{equation*}
  \end{proof}
  \begin{lemma}
    We have the bound
    \begin{equation} \label{L^2 bound}
      \|TG\|_{L^{2}(\mathrm{d} x)}
      \leq 2^{\frac{k}{2}}C\|G\|_{L^{2}_{h'}(L^{2}_{t}; |v(h)|)}.
    \end{equation}
  \end{lemma}
  \begin{proof}
    Set
    \begin{equation*}
      y = \frac{1}{d}\sum_{j=1}^{d} \zeta(t+h_{j}).
    \end{equation*}
    This change of variables is injective because of~\eqref{injective Drury}. 
    The Jacobian is
    \begin{equation*}
      J(t, h)
      = \frac{1}{d^{d}} J_{\Phi_{\zeta}}(t, t+h_{2}, \dots, t+h_{d}).
    \end{equation*}
    The geometric inequality~\eqref{geometric inequality Drury} guarantees that
    \begin{equation} \label{lower bound of jacobian}
      J(t, h) \geq C2^{-k}v(h).
    \end{equation}
    With these variables, set
    \begin{equation*}
      F(y) = \mathbb{1}_{B}(t, h)G(t, h)\frac{v(h)}{J(t, h)}.
    \end{equation*}
    Applying the change of variables to \(T\), we see that
    \begin{equation*}
      TG(x)
      = \int e^{iy \cdot x}F(y) \mathrm{d}y
      = \check{F}(x).
    \end{equation*}
    Plancherel gives \(\|\check{F}\|_{2} = \|F\|_{2}\), so
    \(\|TG\|_{L^{2}(\mathrm{d} x)} = \|F\|_{L^{2}(\mathrm{d}y)}.\)
    Changing variables back and unwinding the definition of \(F\) yields
    \begin{equation} \label{L^2 bound post change of variables}
       \|TG\|_{L^{2}(\mathrm{d} x)}
      = \bigg(\int_{B} |G(t, h)|^{2} \bigg[\frac{v(h)}{J(t, h)}\bigg] v(h)
        \mathrm{d} t \mathrm{d} h\bigg)^{\frac{1}{2}}.
    \end{equation}
    Lines~\eqref{lower bound of jacobian}
    and~\eqref{L^2 bound post change of variables} combine to demonstrate that
    \begin{equation*}
      \|TG\|_{L^{2}(\mathrm{d} x)}
      \leq 2^{\frac{k}{2}}C\bigg(\int_{B} |G(t, h)|^{2}v(h)
        \mathrm{d}t \mathrm{d}h\bigg)^{\frac{1}{2}}.
    \end{equation*}
    Since
    \begin{equation*}
      \bigg(\int_{B} |G(t, h)|^{2} v(h)
        \mathrm{d} t \mathrm{d} h\bigg)^{\frac{1}{2}}
      \leq \|G\|_{L^{2}_{h'}(L^{2}_{t}; |v(h)|)},
    \end{equation*}
    the inequality~\eqref{L^2 bound} is true.
  \end{proof}
  Interpolation of~\eqref{induction bound},~\eqref{L^2 bound}, and the
  trivial \(L^{1}(L^{1}) \rightarrow L^{\infty}\) estimate establishes
  \begin{equation} \label{interpolation bound}
    \|TG\|_{L^{c}}
    \leq 2^{\frac{k}{c'}}C_{a, b}\|G\|_{L^{a}_{h'}(L^{b}_{t}; |v(h)|)}
  \end{equation}
  for all \((a^{-1}, b^{-1})\) in the triangle with vertices \((1, 1)\),
  \(\big(1, \frac{1}{q_{0}'}\big)\), and \(\big(\frac{1}{2}, \frac{1}{2}\big)\),
  with \(c\) satisfying
  \begin{equation*}
    \frac{(d+2)(d-1)}{2}a^{-1}+b^{-1}+\frac{d^{2}+d}{2}c^{-1}
    =\frac{d^{2}+d}{2}.
  \end{equation*}
  In particular, the choice of
  \begin{equation} \label{G definition}
    G(t, h) = |v(h)|^{-1}\prod_{j=1}^{d} g(t+h_{j})
  \end{equation}
  has
  \begin{equation*}
    \|G\|_{L^{a}_{h'}(L^{b}_{t}; |v(h)|)}
    = \bigg(\int_{\mathbb{R}} |v(h)|^{-(a - 1)} \bigg(\int_{\mathbb{R}^{d-1}} 
      |g(t+h_{1}) \cdots g(t+h_{d})|^{b} \mathrm{d}t\bigg)^{\frac{a}{b}}
      \mathrm{d}h'\bigg)^{\frac{1}{a}}.
  \end{equation*}
  As noted in \cite{Drury}, \(v(0, h')^{-1} \in L^{\frac{d}{2}, \infty}_{h'}\),
  so we can apply H\"older's inequality to obtain
  \begin{equation} \label{G L^aL^b Holder}
    \|G\|_{L^{a}_{h'}(L^{b}_{t}; |v(h)|)} \leq \|g\|_{L^{q', 1}_{t}}^{d},
  \end{equation}
  for
  \begin{equation*}
    \begin{cases}
      1 < a < \frac{d+2}{2},
      \\
      a \leq b < \frac{2a}{d+2-da}, \text{ and}
      \\
      \frac{d}{q'}
      = \frac{(d+2)(d-1)}{2}a^{-1}+b^{-1}-\frac{d(d-1)}{2}.
    \end{cases}
  \end{equation*}
  On the other hand, by the definition of \(G\)~\eqref{G definition},
  \begin{equation} \label{TG = Eg^d}
    TG(x) = \frac{1}{d!}\Big(\mathcal{E}g\Big(\frac{x}{d}\Big)\Big)^{d}.
  \end{equation}
  Combining~\eqref{interpolation bound},~\eqref{G L^aL^b Holder}
  and~\eqref{TG = Eg^d}, we see that
  \begin{equation} \label{extension strong q' to p'}
  \|\mathcal{E}g\|_{L^{p'}} \leq C_{p}2^{\frac{k}{p'}}\|g\|_{L^{q', 1}},
  \end{equation}
  for
  \begin{equation} \label{p of a and b}
    \frac{d}{q'}
    = \frac{(d+2)(d-1)}{2}a^{-1}+b^{-1}-\frac{d(d-1)}{2},
  \end{equation}
  where \(p' = \frac{d^{2}+d}{2} q\), and \(a\) and \(b\)
  satisfy~(Figure~\ref{fig: interpolation picture}):
  \begin{equation*}
    \begin{cases}
      \frac{d}{d+2} < a^{-1} < 1,
      \\
      b^{-1} \leq a^{-1},
      \\
      (d+ 2)a^{-1} - 2b^{-1} < d, \text{ and}
      \\
      (q_{0}'-2)a^{-1}+q_{0}'b^{-1} \geq q_{0}'-1.
    \end{cases}
  \end{equation*}
  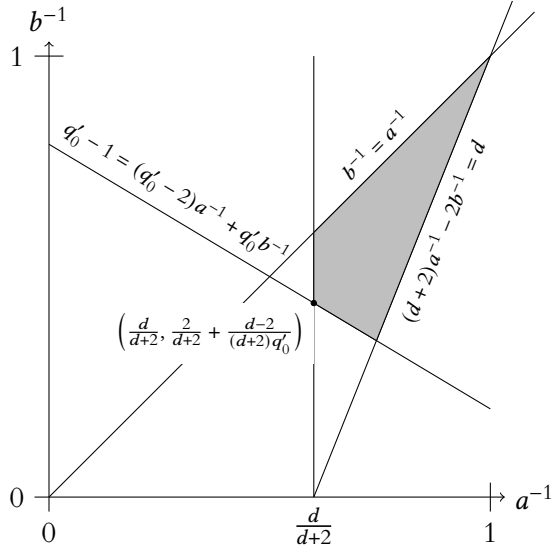
\begin{figure}[H]
    \centering
    \begin{tikzpicture}[x = 2.31 in, y = 2.31 in]
    %
      %
      %
      %
      \draw [->] ($(0, 0) - (0, .5 em)$) node [below] {0}
      -- ($(0, 1) + (0, .5 em)$) node [above] {$b^{-1}$};
      %
      %
      \draw [->] ($(0, 0) - (.5 em, 0)$) node [left] {0}
      -- ($(1, 0) + (.5 em, 0)$) node [right] {$a^{-1}$};
      %
      %
      \draw ($(1, 0) - (0, .5 em)$) node [below] {1}
      -- ($(1, 0) + (0, .5 em)$);
      \draw ($(0, 1) - (.5 em, 0)$) node [left] {1}
      -- ($(0, 1) + (.5 em, 0)$);
      %
      %
      \draw [name path = vertical]
      ({3/5}, 0) node [below] {$\frac{d}{d + 2}$}
      -- ({3/5}, 1);
      %
      %
      \draw [name path = equalslope] (0, 0)
      -- (1.1, 1.1)
      node [above, pos = 0.7, rotate = 45] {\scriptsize $b^{-1} = a^{-1}$};
      %
      %
      \draw [name path = steepslope] ({3/5}, 0)
      -- (1.05, {5/2*1.05 - 3/2})
      node [below, pos = 0.55, rotate = {atan2(5, 2)}]
      {\scriptsize $(d + 2)a^{-1} - 2 b^{-1} = d$};
      %
      %
      \draw [name path = shallowslope] (0, {4/5})
      -- (1, {1/5})
      node [above, pos = 0.26, rotate = {atan2(-3, 5)}]
      {\scriptsize $q_{0}' - 1 = (q_{0}' - 2)a^{-1} + q_{0}'b^{-1}$};
      %
      %
      \path [name intersections = {of = vertical and equalslope}];
      \coordinate (A) at (intersection-1);
      \path [name intersections = {of = equalslope and steepslope}];
      \coordinate (B) at (intersection-1);
      \path [name intersections = {of = steepslope and shallowslope}];
      \coordinate (C) at (intersection-1);
      \path [name intersections = {of = shallowslope and vertical}];
      \coordinate (D) at (intersection-1);
      %
      %
      \filldraw [fill = lightgray] (A) -- (B) -- (C) -- (D) -- cycle;
      %
      %
      \node at (D) [below left, fill = white]
      {\scriptsize
      \(\Big(\frac{d}{d + 2},
        \frac{2}{d + 2} + \frac{d - 2}{(d + 2)q_{0}'}\Big)\)};
      \filldraw (D) circle [radius = 1 pt];
    \end{tikzpicture}
    \caption{\small \textit{Range of \(a\) and \(b\) for
    which~\eqref{extension strong q' to p'} holds with \(q'\)
    satisfying~\eqref{p of a and b} and \(p' = \frac{d^{2} + d}{2} q\).}}
    \label{fig: interpolation picture}
  \end{figure}
  The point
  \((a^{-1}, b^{-1})
  = (\frac{d}{d + 2}, \frac{2}{d + 2} + \frac{d - 2}{(d + 2)q_{0}'})\)
  lies on the boundary of this region and satisfies
  \begin{equation*}
    \frac{(d + 2) (d - 1)}{2}a^{-1} + b^{-1} - \frac{d(d - 1)}{2}
    < \frac{d}{q_{0}'}.
  \end{equation*}
  Taking \((a^{-1}, b^{-1})\) slightly inside of the region and using real
  interpolation, we obtain
  \begin{equation*}
    \|\mathcal{E}g\|_{L^{p'}}
    \leq C_{p}2^{\frac{k}{p'}}\|g\|_{L^{q'}}, 
    \quad p' = \frac{d^{2} + d}{2}q, 
    \quad \frac{d}{q'}
    > \frac{2}{(d + 2)} + \frac{d - 2}{(d + 2)q_{0}'}.
  \end{equation*}
  This closes the induction and proves Proposition~\ref{Drury argument}.
\end{proof}
Now we turn to the deduction of Theorem~\ref{restriction theorem} from the
following stronger version of Theorem~\ref{geometric theorem}. Recall that the
symbol ``\(\approx\)'' depends only on the dimension \(d\) and the original
interval \(\tilde{I}\), unless otherwise specified by subcripts. In particular,
there is no dependence on \(m\) or \(h\) in what follows.
\begin{theorem} \label{strong geometric theorem}
  Let \(\tilde{I} \subset \mathbb{R}\) be a compact interval. If
  \(N \in \mathbb{R}\) with \(N > d\) and
  \(\gamma \in C^{N}(\tilde{I}; \mathbb{R}^{d})\), then there is a
  family of intervals \(I_{k, l}\) such that for every \(m \in \mathbb{N}\)
  and \(h \in \mathbb{R}^{m}\),
  \begin{equation} \label{strong interval decomposition}
    \tilde{I}
    = \{t \in \tilde{I} : \tau(t) = 0\}
      \cup \bigcup_{k \geq C_{\gamma}} \bigcup_{l = 1}^{N_{k}} I_{k, l},
  \end{equation}
  \begin{equation} \label{strong torsion size}
    \tau_{h}(t) \approx 2^{-k} \text{ for } t \in (I_{k, l})_{h},
  \end{equation}
  \begin{equation*} 
    (t_{1}, \dots, t_{d}) \mapsto \Phi_{\gamma_{h}}(t_{1}, \dots, t_{d})
    \text{ is 1-to-1 for } t_{1} < \dots < t_{d} \in (I_{k, l})_{h},
  \end{equation*}
  \begin{equation*} 
    |J_{\Phi_{\gamma_{h}}}(t_{1}, \dots, t_{d})|
    \geq C2^{-k}|v(t_{1}, \dots, t_{d})|
    \text{ for } (t_{1}, \dots, t_{d}) \in (I_{k, l})_{h}^{d},
  \end{equation*}
  \begin{equation} \label{strong number of intervals}
    N_{k} \leq C_{N, \gamma}(k+C_{\gamma})^{d}2^{k\sum_{j=1}^{d}\frac{1}{N-j}},
    \text{ and}
  \end{equation}
  \begin{equation} \label{strong total length of intervals}
    \sum_{l=1}^{N_{k}}|I_{k, l}|
    \leq C_{N, \gamma}(k+C_{\gamma})^{d}.
  \end{equation}
\end{theorem}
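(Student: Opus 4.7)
My plan is to carry out the two-stage decomposition announced in the outline, and then verify the Jacobian lower bound and injectivity on each atom.

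\textbf{Primary decomposition.} I would first decompose $\{t\in\tilde I:\tau(t)>0\}$ into dyadic level sets $\{\tau\approx 2^{-k}\}$, write each as a union of maximal intervals, and then refine so that $\gamma^{(d)}$ is essentially constant on each piece. Since $\gamma^{(d)}\in C^{N-d}$ and $\tau$ is a continuous function of $\gamma',\dots,\gamma^{(d)}$, this stage contributes a factor of $(k+C_\gamma)^d$ to the count and to the total length in \eqref{strong total length of intervals} — the growth in $k$ is polynomial rather than exponential because the relevant comparison is between the oscillation of a fixed $C^{N-d}$ function and a logarithmic threshold (how many dyadic comparability shells of $\tau$ can be stitched together on a single primary piece).

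\textbf{Secondary decomposition, torsion size, and Jacobian.} Inside each interval of the primary stage I further subdivide so that every $\gamma^{(j)}$, $j=1,\dots,d$, oscillates by at most a small multiple of itself. Because $\gamma^{(j)}\in C^{N-j}$, each such subinterval must have length $\lesssim 2^{-k/(N-j)}$, multiplying the count per primary interval by $\prod_{j=1}^{d} 2^{k/(N-j)}=2^{k\sum_{j=1}^{d} 1/(N-j)}$; combined with the primary step this gives \eqref{strong number of intervals}. The same smallness propagates automatically to every offspring curve $\gamma_h\in\Upsilon$: since $\gamma_h^{(j)}(t)=\frac{1}{m}\sum_i\gamma^{(j)}(t+h_i)\approx\gamma^{(j)}(t)$ on $(I_{k,l})_h$, the matrix $[\gamma_h'(t),\dots,\gamma_h^{(d)}(t)]$ is a small relative perturbation of $[\gamma'(t),\dots,\gamma^{(d)}(t)]$, yielding \eqref{strong torsion size}. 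For the Jacobian inequality on a given atom, I Taylor-expand each $\gamma_h^{(r)}(t_i)$ about a basepoint in $(I_{k,l})_h$ and perform elementary column operations to factor $J_{\Phi_{\gamma_h}}(t_1,\dots,t_d)$ as $\det[\gamma_h'(\cdot),\dots,\gamma_h^{(d)}(\cdot)]\cdot v(t_1,\dots,t_d)$ times a matrix that the secondary decomposition forces to be $I+O(\text{small})$; taking absolute values and using $\tau_h\approx 2^{-k}$ produces the geometric inequality. Injectivity on the strict simplex $\{t_1<\dots<t_d\}\subset (I_{k,l})_h^d$ then follows from a standard degree or continuation argument using the sign-definite Jacobian.

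\textbf{Main obstacle.} The delicate step is the sharp counting in the secondary decomposition, which must simultaneously balance $d$ different Hölder exponents $N-j$, remain uniform over every $\gamma_h\in\Upsilon$ (so it must be built from $\gamma$ alone, with no opportunity to tailor it to a particular $h$), and mesh with the primary-level polynomial growth so that \eqref{strong total length of intervals} keeps only the factor $(k+C_\gamma)^d$ even though \eqref{strong number of intervals} has an exponential-in-$k$ count of atoms. A secondary difficulty is extracting the polynomial $(k+C_\gamma)^d$ in the primary step: naively one would get $|\tilde I|$ for the total length, but one must allow controlled overlap between consecutive torsion shells so that offspring buffers $(I_{k,l})_h$ remain nonempty for all admissible $h$, and it is this overlap that accounts for the extra polynomial factor.
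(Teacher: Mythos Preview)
There are two genuine gaps, both traceable to your choice to control the individual column vectors \(\gamma^{(j)}\) rather than the \(j\times j\) minors \(L_{\sigma,j}^{\gamma}=\det[\gamma_{\sigma(i)}^{(r)}]_{1\le i,r\le j}\) of the torsion matrix.

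First, ``\(\gamma^{(j)}\) oscillates by at most a small multiple of itself'' does \emph{not} yield \(\tau_h\approx 2^{-k}\) or the Jacobian lower bound. When the columns \(\gamma',\dots,\gamma^{(d)}\) are nearly linearly dependent---precisely the regime \(\tau\approx 2^{-k}\) with \(k\) large---a perturbation that is small relative to each column can be enormous relative to the determinant: one can have \(\tau\approx 2^{-k}\) while \(\prod_j|\gamma^{(j)}|\approx 1\), and then multilinearity gives an error of size \(\epsilon\), not \(\epsilon\,2^{-k}\). The same objection defeats your Taylor-expansion factorisation of \(J_{\Phi_{\gamma_h}}\): the ``\(I+O(\text{small})\)'' correction is small only in an absolute sense, which is useless after dividing by \(\tau\). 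For the identical reason the count \(\prod_j 2^{k/(N-j)}\) is unjustified: the scale \(2^{-k}\) appears nowhere in the size of the vector \(\gamma^{(j)}\), so nothing forces a subinterval on which \(\gamma^{(j)}\) is relatively constant to have length \(\lesssim 2^{-k/(N-j)}\). The paper instead decomposes so that each \emph{scalar} \(L_{\sigma,j}^{\gamma}\) is dyadically constant. The key point (Lemma~\ref{number of k_j}) is that one can choose \(\sigma\) so that \(|L_{\sigma,j}^{\gamma}|\gtrsim_\gamma \tau\approx 2^{-k}\); since \(L_{\sigma,j}^{\gamma}\in C^{N-j}\), the level-set lemma of Chen--Fan--Wang then legitimately produces the factor \(2^{k_j/(N-j)}\), and the polynomial \((k+C_\gamma)^d\) arises from summing over the \(O(k)\) admissible values of each auxiliary scale \(k_1,\dots,k_{d-1}\)---not from overlap between torsion shells. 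The offspring-uniform refinement is then obtained via polynomial approximation and Stovall's Lemma~\ref{Stovall lemma 2.3}, not by the averaging heuristic.

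Second, the injectivity step is incorrect as stated: a sign-definite Jacobian on a convex domain does \emph{not} force a map \(\mathbb{R}^d\to\mathbb{R}^d\) to be injective (already \((x,y)\mapsto(e^x\cos y,\,e^x\sin y)\) on a rectangle of height \(>2\pi\) is a counterexample). The paper uses Steinig's argument, which requires that for \emph{every} truncation \(\zeta=((\gamma_h)_1,\dots,(\gamma_h)_n)\), \(1\le n\le d\), the Jacobian \(J_{\Phi_\zeta}\) be single-signed on the ordered simplex. That is exactly what control of all the minors \(L_{\sigma,j}^{\gamma_h}\) on each atom provides, via the Dendrinos--Wright iterated-integral identity of Section~\ref{geometric inequality section}; a decomposition built only from \(\gamma^{(j)}\) and \(\tau\) does not supply it.
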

\begin{proof} [Deduction of Theorem~\ref{restriction theorem}]
  Let \(\{I_{k, l}\}\) be the intervals given in
  Theorem~\ref{strong geometric theorem}. By~\eqref{strong interval decomposition},
  \begin{equation*}
    \int_{\tilde{I}} |\hat{f}(\gamma(t))|^{q} w_{\epsilon}(t) \mathrm{d} t
    \leq \sum_{k \geq C_{\gamma}} \sum_{l = 1}^{N_{k}}
      \int_{I_{k, l}} |\hat{f}(\gamma(t))|^{q} w_{\epsilon}(t) \mathrm{d} t.
  \end{equation*}
  Utilizing the bound~\eqref{strong torsion size} on the size of the torsion on
  each interval,
  \begin{equation*}
    \sum_{k \geq C_{\gamma}} \sum_{l = 1}^{N_{k}}
      \int_{I_{k, l}} |\hat{f}(\gamma(t))|^{q} w_{\epsilon}(t) \mathrm{d} t
    \leq C\sum_{k \geq C_{\gamma}} \sum_{l = 1}^{N_{k}}
      2^{\frac{-2k}{d^{2} + d} - k\epsilon}
      \int_{I_{k, l}} |\hat{f}(\gamma(t))|^{q} \mathrm{d} t.
  \end{equation*}
  An application of H\"older's inequality shows that for any
  \(q \leq \frac{2}{d^{2} + d}p'\),
  \begin{align*}
    &\sum_{k \geq C_{\gamma}} \sum_{l = 1}^{N_{k}}
      2^{\frac{-2k}{d^{2} + d} - k\epsilon}
      \int_{I_{k, l}} |\hat{f}(\gamma(t))|^{q} \mathrm{d} t
    \\
    &\leq \sum_{k \geq C_{\gamma}} \sum_{l = 1}^{N_{k}}
      2^{\frac{-2k}{d^{2} + d} - k\epsilon}
      |I_{k, l}|^{1 - \frac{(d^{2} + d)q}{2p'}}
      \bigg(\int_{I_{k, l}} |\hat{f}(\gamma(t))|^{\frac{2p'}{d^{2} + d}}
      \mathrm{d} t\bigg)^{\frac{(d^{2} + d)q}{2p'}},
  \end{align*}
  where \(|I_{k, l}|\) is the length of the interval \(I_{k, l}\). Each
  interval \(I_{k, l}\) satisfies the hypotheses of
  Proposition~\ref{Drury argument}, so
  by~\eqref{restriction bound Drury},
  \begin{align*}
    &\sum_{k \geq C_{\gamma}}\sum_{l=1}^{N_{k}}
      2^{\frac{-2k}{d^{2}+d}-k\epsilon}|I_{k, l}|^{1-\frac{(d^{2}+d)q}{2p'}}
      \bigg(\int_{I_{k, l}} |\hat{f}(\gamma(t))|^{\frac{2p'}{d^{2}+d}}
      \mathrm{d}t\bigg)^{\frac{(d^{2}+d)q}{2p'}}
    \\
    &\leq \sum_{k \geq C_{\gamma}}\sum_{l=1}^{N_{k}}
      2^{\frac{-2k}{d^{2}+d}-k\epsilon}|I_{k, l}|^{1-\frac{(d^{2}+d)q}{2p'}}
      2^{\frac{kq}{p'}}C_{p}^{q}\|f\|_{L^{p}(\mathbb{R}^{d})}^{q}.
  \end{align*}
  Another application of H\"older's inequality yields
  \begin{align*}
    &\sum_{k \geq C_{\gamma}} \sum_{l = 1}^{N_{k}}
      2^{\frac{-2k}{d^{2} + d} - k\epsilon}
      |I_{k, l}|^{1 - \frac{(d^{2} + d)q}{2p'}}
      2^{\frac{kq}{p'}}C_{p}^{q}\|f\|_{L^{p}(\mathbb{R}^{d})}^{q}
    \\
    &\leq C_{p}^{q}\|f\|_{L^{p}(\mathbb{R}^{d})}^{q}
      \sum_{k \geq C_{\gamma}}
      2^{\frac{-2k}{d^{2} + d} - k\epsilon + \frac{kq}{p'}}
      N_{k}^{\frac{(d^{2} + d)q}{2p'}}
      \sum_{l = 1}^{N_{k}} |I_{k, l}|.
  \end{align*}
  With the bounds~\eqref{strong number of intervals} on \(N_{k}\)
  and~\eqref{strong total length of intervals} on the total lengths of the
  intervals \(I_{k, l}\),
  \begin{equation*}
    C_{p}^{q}\|f\|_{L^{p}}^{q}
      \sum_{k \geq C_{\gamma}}
      2^{\frac{-2k}{d^{2}+d}-k\epsilon+\frac{kq}{p'}}
      N_{k}^{\frac{(d^{2}+d)q}{2p'}}
    \leq C_{p, q, N, \gamma, \epsilon}\|f\|_{L^{p}}^{q}
      \sum_{k \geq 1}k^{d}
      2^{\frac{-2k}{d^{2}+d}-k\epsilon+\frac{kq}{p'}
        +k\frac{(d^{2}+d)q}{2p'}\sum_{j=1}^{d}\frac{1}{N-j}}.
  \end{equation*}
  The sum converges whenever
  \begin{equation*}
    \frac{(d^{2}+d)q}{2p'}\sum_{j=1}^{d}\frac{1}{N-j}-\frac{2}{d^{2}+d}-\epsilon
      +\frac{q}{p'}
    < 0,
  \end{equation*}
  which occurs in either of the cases:
  \begin{equation*}
    \begin{cases}
      1
        \leq q
        \leq \frac{2}{d^{2} + d}p'
        \quad & \text{if } \epsilon > \sum_{j = 1}^{d} \frac{1}{N - j}
      \\
      1
        \leq q
        < \frac{\frac{2}{d^{2} + d} + \epsilon}
          {1 + \frac{d^{2} + d}{2}\sum_{j = 1}^{d} \frac{1}{N - j}}p'
        \quad & \text{if }
          0 \leq \epsilon \leq \sum_{j = 1}^{d} \frac{1}{N - j}.
    \end{cases}
    \qedhere
  \end{equation*}
\end{proof}
  \section{The Decomposition}
%
This section contains the decomposition for
Theorem~\ref{strong geometric theorem}, of which
Theorem~\ref{geometric theorem} is essentially a special case. First, we will
create an initial decomposition using Lemma 8 from~\cite{ChenFanWang} to find
intervals on which we can prove Theorem~\ref{strong geometric theorem} for the
original curve \(\gamma\). Then, we will use polynomial approximation and
Lemma 2.3 from~\cite{Stovall} to decompose further into intervals on which
offspring curves are well-behaved.

More concretely, the methods in~\cite{DendrinosWright} that we need to prove
Theorem~\ref{strong geometric theorem} on each interval in our final decomposition
require an examination of minors of the torsion matrix for all offspring curves.
With that in mind, for a curve \(\zeta\), a permutation \(\sigma \in S_{d}\)
(the symmetric group on \(d\) elements), and \(1 \leq j \leq d\), define
\begin{equation*} 
  L_{\sigma, j}^{\zeta}(t)
  = \det\begin{pmatrix}
    \zeta_{\sigma(1)}'(t) & \cdots & \zeta_{\sigma(1)}^{(j)}(t) \\
    \vdots                &        & \vdots                     \\
    \zeta_{\sigma(j)}'(t) & \cdots & \zeta_{\sigma(j)}^{(j)}(t)
  \end{pmatrix}.
\end{equation*}
Whenever \(j = d\), we will omit \(\sigma\) since \(|L_{\sigma, d}^{\zeta}|\) does
not depend on \(\sigma\). We also omit \(\sigma\) when \(\sigma\) is the identity.
Recall that \(\gamma \in C^{N}\) for some \(d < N \in \mathbb{R}\). The main
result of this section is the following proposition.
\begin{proposition} \label{full decomposition proposition}
  For every \(k_{d} \in \mathbb{Z}\), there is a family of intervals
  \(\{I_{l}\}\) and permutations \(\sigma_{l}\) such that for every
  \(m \in \mathbb{N}\) and \(h \in \mathbb{R}^{m}\),
  \begin{equation*}
    \{t \in \tilde{I} : 2^{-k_{d}-1} \leq |L_{d}^{\gamma}(t)| \leq 2^{-k_{d}}\}
      \subseteq \bigcup_{l} I_{l},
  \end{equation*}
  \begin{equation*} 
    |L_{j, \sigma_{l}}^{\gamma_{h}}(t)| \approx 2^{-k_{j}},
    \quad \forall t \in (I_{l})_{h},\ 1 \leq j \leq d, \text{ and}
  \end{equation*}
  \begin{equation} \label{full decomposition number of intervals}
    \#\{I_{l}\}
    \leq C_{N, \gamma}(k_{d}+C_{\gamma})^{d}2^{k_{d}\sum_{j=1}^{d}\frac{1}{N-j}}.
  \end{equation}
  \begin{equation*} 
    \sum_{l} |I_{k, l}| \leq C(k_{d}+C_{\gamma})^{d}.
  \end{equation*}
\end{proposition}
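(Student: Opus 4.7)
The plan is to execute the two-stage decomposition outlined at the start of Section~3. Stage one handles only the original curve $\gamma$ and fixes the dyadic sizes $2^{-k_j}$ together with the permutation $\sigma_l$ on each candidate interval. Stage two refines those intervals so that the same dyadic sizes are inherited by every offspring curve $\gamma_h$, uniformly in $m \in \mathbb{N}$ and $h \in \mathbb{R}^m$.

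For stage one, I would apply Lemma~8 of \cite{ChenFanWang} to the single curve $\gamma$ on the level set $\{t \in \tilde I : |L_d^\gamma(t)| \approx 2^{-k_d}\}$. That lemma supplies intervals and permutations $\sigma_l \in S_d$ on each, and on each interval guarantees $|L_{j,\sigma_l}^\gamma(t)| \approx 2^{-k_j}$ for all $1 \leq j \leq d$. The count of such intervals is controlled because a new dyadic block at scale $j$ can only begin when the ratio $L_{j,\sigma_l}^\gamma/L_{j-1,\sigma_l}^\gamma$ crosses a dyadic threshold, and the $C^{N-j}$ regularity of the $j$-th row of derivatives of $\gamma$ bounds the number of such crossings at scale $j$ by $O((k_d+C_\gamma)\, 2^{k_d/(N-j)})$. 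Taking the product over $j=1,\ldots,d$ produces the exponent $\sum_{j=1}^d 1/(N-j)$ in \eqref{full decomposition number of intervals}; the total-length estimate follows from disjointness within each permutation class together with a polynomial $(k_d+C_\gamma)^d$ overcount for the choice of the $d-1$ free parameters $k_1, \dots, k_{d-1}$ and of $\sigma_l$.

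For stage two, I would promote each stage-one interval to one on which all offspring curves are controlled. On an interval $I_l$, Taylor-approximate $\gamma$ by a polynomial $P$ of degree $\lfloor N \rfloor$. Using multilinearity of the determinant and the averaging formula $\gamma_h^{(i)}(t) = \tfrac{1}{m}\sum_j \gamma^{(i)}(t+h_j)$, one writes $L_{j,\sigma_l}^{\gamma_h}(t) = L_{j,\sigma_l}^{P_h}(t) + E_j(t,h)$ with $|E_j|$ controlled by $\|\gamma - P\|_{C^j}$ times a uniform constant (independent of $m,h$ because the averaging is convex). For the polynomial part, Lemma~2.3 of \cite{Stovall} yields an $O_d(1)$ sub-decomposition of $I_l$ on which $|L_{j,\sigma_l}^{P_h}(t)|$ is dyadically controlled uniformly in $m$ and $h$. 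Shrinking the sub-intervals enough that $|E_j| \ll 2^{-k_j}$ then transfers the polynomial estimates back to $\gamma_h$. Since the number of sub-pieces added per input interval is bounded in terms of $d$ alone, both \eqref{full decomposition number of intervals} and the total-length bound survive up to a harmless constant.

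The main obstacle I anticipate is the simultaneous quantitative bookkeeping in stage two: the Taylor error $|E_j|$ must sit below $2^{-k_j}$ for every $1 \leq j \leq d$ at once, and this must be done using only the $C^N$ modulus of continuity of $\gamma^{(\lfloor N \rfloor)}$ rather than the stronger integer regularity used in earlier works such as \cite{DendrinosWright}. Working out the precise smallness condition on the sub-interval lengths, and verifying that the resulting count still fits inside $(k_d+C_\gamma)^d\,2^{k_d\sum_j 1/(N-j)}$ rather than inflating the exponent, is the technical heart of the argument and is exactly where the fractional regularity hypothesis $N > d$ is tight.
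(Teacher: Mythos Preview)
Your two-stage plan matches the paper's approach exactly: an initial decomposition for $\gamma$ alone via iterated use of \cite{ChenFanWang} Lemma~8, followed by a secondary refinement via Taylor approximation plus \cite{Stovall} Lemma~2.3 to pass to offspring curves. Two points deserve correction, however.

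First, Lemma~8 of \cite{ChenFanWang} is a purely scalar statement about a single $\varphi \in C^{1/\alpha}$; it does not supply a permutation. In the paper the permutation is produced by a separate cofactor-expansion argument (Lemma~\ref{number of k_j}): for each $t$ one finds $\sigma$ with $|L_{\sigma,j}^\gamma(t)| \gtrsim_\gamma |L_d^\gamma(t)|$ for every $j$, which pins each $k_j$ to the window $[A_j,\,k_d + C_\gamma]$. Only after this is Lemma~8 applied $d$ times, once to each scalar function $L_{\sigma,j}^\gamma$ with $\alpha = 1/(N-j)$; the product of the resulting counts gives the exponent $\sum_j 1/(N-j)$, and the $(k_d+C_\gamma)^d$ factor in the \emph{length} bound comes from summing over the $\leq (k_d+C_\gamma)$ admissible values of each $k_j$.

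Second, your claim that stage two adds only $O_d(1)$ pieces per stage-one interval is correct for the Stovall step but not for the shrinking. The Taylor remainder satisfies $|E_j| \lesssim |I|^{N-j}\|\gamma\|_{C^N}^{j}$, so forcing $|E_j| \ll 2^{-k_j}$ for every $j$ requires $|I| \lesssim 2^{-k_d/(N-j)}$, worst when $j=d$. The paper does not control this per interval; instead it uses the global total-length bound $\sum_l |I_l| \leq C(k_d+C_\gamma)^d$ from stage one and simply chops every interval down to length $\lesssim 2^{-k_d\sum_j 1/(N-j)}$, which yields at most $C(k_d+C_\gamma)^d\, 2^{k_d\sum_j 1/(N-j)}$ pieces in total. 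This is the precise mechanism resolving the obstacle you flag at the end, and it is why the polynomial factor $(k_d+C_\gamma)^d$ appears in \eqref{full decomposition number of intervals} but not in the stage-one count \eqref{initial decomposition number and length of intervals}.
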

\subsection*{The initial decomposition}
  We first prove Proposition~\ref{full decomposition proposition} in the
  special case \(h = 0\).
  \begin{proposition} \label{k decomposition lemma}
    For every \(k_{d} \in \mathbb{Z}\), there is a family of intervals \(\{I_{l}\}\)
    with
    \begin{equation}  \label{initial decomposition number and length of intervals}
      \sum_{I \in \mathcal{I}_{k_{d}}} |I| \leq C(k_{d}+C_{\gamma})^{d}
      \quad \text{and} \quad
      \#\{I_{l}\} \leq C_{N, \gamma}2^{k_{d}\sum_{j=1}^{d}\frac{1}{N-j}}
    \end{equation}
    such that
    \begin{equation*}
      \{t \in \tilde{I} : 2^{-k_{d} - 1} \leq |L_{d}^{\gamma}(t)| \leq 2^{-k_{d}}\}
      \subseteq \bigcup_{I \in \mathcal{I}_{k_{d}}} I.
    \end{equation*}
    Furthermore, there are constants \(A_{j}\) depending only on \(\gamma\), \(j\),
    and \(d\) such that on each interval \(I \in \mathcal{I}_{k_{d}}\), there is a
    permutation \(\sigma \in S_{d}\) and \(k_{j} \in \mathbb{Z}\) with
    \(A_{j} \leq k_{j} \leq k_{d} + A_{j} + \log(d!\|\gamma\|_{C^{d}}^{d})\)
    such that
    \begin{equation} \label{size of L_j}
      2^{-k_{j} - 2} \leq |L_{\sigma, j}^{\gamma}(t)| \leq 2^{-k_{j} + 1},
      \quad t \in I,\ 1 \leq j \leq d.
    \end{equation}
  \end{proposition}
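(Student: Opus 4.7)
The plan is to decompose inductively from level $j = d$ down to $j = 1$, invoking Chen--Fan--Wang's Lemma 8 at each step. At the base level, Lemma 8 applied to $L_d^\gamma \in C^{N-d}$ produces a decomposition of the set $\{t : 2^{-k_d - 1} \leq |L_d^\gamma(t)| \leq 2^{-k_d}\}$ into at most $C_{N, \gamma} 2^{k_d/(N-d)}$ intervals on which $|L_d^\gamma| \approx 2^{-k_d}$.

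For the inductive step, on each interval $I$ produced at level $j + 1$, I carry a chain of index sets $S_d \supset S_{d-1} \supset \cdots \supset S_{j+1}$ (encoding a partial permutation $\sigma$) with $|L_{S_i}^\gamma| \approx 2^{-k_i}$ for $i > j$; here I exploit that $|L_{\sigma, i}^\gamma|$ depends only on the unordered set $\{\sigma(1), \dots, \sigma(i)\}$. Expanding the $(j+1) \times (j+1)$ determinant $L_{S_{j+1}}^\gamma$ along its last column gives
\begin{equation*}
  |L_{S_{j+1}}^\gamma(t)|
  \leq (j + 1) \|\gamma\|_{C^{j+1}}
    \max_{\substack{S_j \subset S_{j+1} \\ |S_j| = j}} |L_{S_j}^\gamma(t)|,
\end{equation*}
so at each $t \in I$ some extension $S_j$ satisfies $|L_{S_j}^\gamma(t)| \geq C_\gamma 2^{-k_{j+1}}$. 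Combined with the trivial upper bound $|L_{S_j}^\gamma| \leq d! \|\gamma\|_{C^d}^d$, this forces $k_j$ into a range of size $O(k_d + C_\gamma)$, meeting the bound $A_j \leq k_j \leq k_d + A_j + \log(d! \|\gamma\|_{C^d}^d)$. For each of the at most $j + 1$ candidates $S_j \subset S_{j+1}$, I apply Chen--Fan--Wang's Lemma 8 to $L_{S_j}^\gamma \in C^{N-j}$ and then take the common refinement over all candidates; on each resulting sub-subinterval I assign the extension that achieves the smallest $k_j$.

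Iterating and noting that each Chen--Fan--Wang step contributes $O(2^{k_j/(N-j)})$ intervals per dyadic value of $k_j$, the total interval count is bounded by
\begin{equation*}
  \#\{I_l\}
  \leq C_{N, \gamma} \prod_{j = 1}^{d} \sum_{k_j \leq k_d + C_\gamma} 2^{k_j/(N-j)}
  \leq C_{N, \gamma} 2^{k_d \sum_{j = 1}^{d} 1/(N-j)},
\end{equation*}
since each geometric sum is dominated by its largest term; constant factors from permutation choices and the number of candidates at each level are absorbed into $C_{N, \gamma}$. For the total length, each refinement partitions its parent interval without duplication, so $\sum_l |I_l| \leq |\tilde I|$ times the number of $(k_1, \dots, k_{d-1})$ configurations, yielding the claimed $C(k_d + C_\gamma)^d$ bound.

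The main obstacle is executing the inductive step cleanly: the choice of extension $S_j$ must be made piecewise-constantly on genuine subintervals, without disturbing the dyadic control already established for $i > j$. The trick is to take the common refinement over all $j + 1$ candidate extensions before committing to any single one, so that on each final sub-subinterval every $|L_{S_j}^\gamma|$ is simultaneously comparable to its own dyadic value, and the permutation $\sigma$ is then assembled level by level by a pointwise argmax. Ensuring that the resulting combinatorial proliferation is only a multiplicative constant per level, and that the Chen--Fan--Wang counts telescope into the $2^{k_d \sum 1/(N-j)}$ bound rather than compounding further, is the delicate bookkeeping at the heart of the argument.
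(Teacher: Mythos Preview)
Your proposal is correct and takes essentially the same approach as the paper: cofactor expansion along the last column to bound each $k_j$ in terms of $k_d$, followed by $d$ iterated applications of Chen--Fan--Wang's Lemma~8. The only organizational difference is that the paper first isolates the cofactor argument as a separate pointwise lemma (for each $t$ there exists a single good $\sigma$) and then simply takes a union over all $d!$ full permutations before applying Chen--Fan--Wang for each fixed $\sigma$, rather than building $\sigma$ level by level via common refinements over the $j+1$ candidate extensions; this is slightly cleaner bookkeeping but equivalent in substance and yields the same counts.
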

  The first step in proving Proposition~\ref{k decomposition lemma} is to show
  that for each \(t \in I\), there is a permutation \(\sigma\) such that the
  \(L_{\sigma, j}^{\gamma}(t)\)'s are generally decreasing in \(j\).
  \begin{lemma} \label{number of k_j}
    There are constants \(A_{j}\) such that for every \(t \in I\), there is a
    permutation \(\sigma\) such that if
    \begin{equation*}
      2^{-k_{d}-1} \leq |L_{d}^{\gamma}(t)| \leq 2^{-k_{d}},
    \end{equation*}
    then there is \(k_{j} \in \mathbb{Z}\) with
    \(A_{j} \leq k_{j} \leq k_{d} + A_{j} + \log(d!\|\gamma\|_{C^{d}}^{d})\)
    such that
    \begin{equation*}
      2^{-k_{j}-1} \leq |L_{\sigma, j}^{\gamma}(t)| \leq 2^{-k_{j}}.
    \end{equation*}
  \end{lemma}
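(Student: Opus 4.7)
The plan is to reduce the lemma to finding, for each $t$, a single permutation $\sigma$ satisfying $|L_{\sigma, j}^{\gamma}(t)| \gtrsim_{d, \gamma} |L_d^{\gamma}(t)| \approx 2^{-k_d}$ simultaneously for all $1 \leq j \leq d$. Once such a $\sigma$ is in hand, the trivial upper bound $|L_{\sigma, j}^{\gamma}(t)| \leq j! \|\gamma\|_{C^d}^{j}$ (coming from the Leibniz expansion of a $j \times j$ determinant with entries of size at most $\|\gamma\|_{C^d}$) produces the lower bound $k_j \geq A_j$, while the lower bound on $|L_{\sigma, j}^{\gamma}(t)|$ produces the upper bound $k_j \leq k_d + A_j + \log(d! \|\gamma\|_{C^d}^d)$. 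A convenient choice is $A_j \approx -\log(j! \|\gamma\|_{C^d}^{j})$, which makes the two constraints on $k_j$ match.

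To construct $\sigma$, I would work with the matrix $M = (\gamma_i^{(k)}(t))_{1 \leq i, k \leq d}$, for which $|\det M| = |L_d^{\gamma}(t)|$ and $|L_{\sigma, j}^{\gamma}(t)| = |\det M[S, \{1, \dots, j\}]|$ whenever $S = \sigma(\{1, \dots, j\})$. The strategy is to build a \emph{nested} chain $S_1 \subset S_2 \subset \dots \subset S_d = \{1, \dots, d\}$ with $|S_j| = j$, so that a single permutation realizes all the $S_j$'s at once. Starting from $S_d$, and having selected $S_{j+1}$, I apply Laplace expansion along the last column of $M[S_{j+1}, \{1, \dots, j+1\}]$ to write
\begin{equation*}
  \det M[S_{j+1}, \{1, \dots, j+1\}]
  = \sum_{i \in S_{j+1}} \pm \gamma_i^{(j+1)}(t) \det M[S_{j+1} \setminus \{i\}, \{1, \dots, j\}].
\end{equation*}
Since $|\gamma_i^{(j+1)}(t)| \leq \|\gamma\|_{C^d}$ and the sum has $j+1$ terms, the pigeonhole principle yields some $i^{\ast} \in S_{j+1}$ with
\begin{equation*}
  |\det M[S_{j+1} \setminus \{i^{\ast}\}, \{1, \dots, j\}]|
  \geq \frac{|\det M[S_{j+1}, \{1, \dots, j+1\}]|}{(j+1) \|\gamma\|_{C^d}},
\end{equation*}
and I set $S_j = S_{j+1} \setminus \{i^{\ast}\}$. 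Defining $\sigma$ by $\{\sigma(j)\} = S_j \setminus S_{j-1}$ gives a permutation with $\sigma(\{1, \dots, j\}) = S_j$ for every $j$.

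Telescoping this inequality from $j = d$ down to the given $j$ produces $|L_{\sigma, j}^{\gamma}(t)| \geq (j!/d!) \|\gamma\|_{C^d}^{-(d-j)} |L_d^{\gamma}(t)|$, which combined with $|L_d^{\gamma}(t)| \geq 2^{-k_d - 1}$ gives precisely the lower bound on $|L_{\sigma, j}^{\gamma}(t)|$ needed to obtain the stated upper bound on $k_j$; the powers of $\|\gamma\|_{C^d}$ can all be absorbed into the $\log(d! \|\gamma\|_{C^d}^d)$ slack. The only real subtlety — and the whole reason the nested construction is needed — is that a \emph{single} permutation must serve all $d$ values of $j$ at once; a naive application of Laplace expansion would produce a potentially different optimal row set $S$ for each $j$. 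The iterative, chain-building approach sidesteps this obstacle cleanly, and the remaining steps are bookkeeping of the multiplicative constants.
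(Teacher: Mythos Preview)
Your proposal is correct and follows essentially the same approach as the paper: both arguments build the permutation by downward induction from \(j=d\), at each step using the cofactor (Laplace) expansion along the last column together with pigeonhole to peel off one row, yielding the identical bound \(|L_{\sigma,j}^{\gamma}(t)| \geq \frac{j!}{d!\,\|\gamma\|_{C^{d}}^{d-j}}\,|L_{d}^{\gamma}(t)|\). Your phrasing in terms of a nested chain \(S_{1}\subset\cdots\subset S_{d}\) is just a repackaging of the paper's step-by-step specification of \(\sigma(d),\sigma(d-1),\dots\), and your choice \(A_{j}=-\log(j!\,\|\gamma\|_{C^{d}}^{j})\) matches the paper's exactly.
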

  \begin{proof}
    Fix \(t \in I\). For each \(j\), let \(k_{j}\) be the unique integer that
    satisfies
    \begin{equation*}
      2^{-k_{j}-1} \leq |L_{\sigma, j}^{\gamma}(t)| < 2^{-k_{j}}.
    \end{equation*}
    For any permutation \(\sigma\),
    \begin{equation*}
      L_{\sigma, j}^{\gamma} \leq j!\|\gamma\|_{C^{d}}^{j}.
    \end{equation*}
    Hence,
    \begin{equation} \label{L_j's are close lower bound}
      k_{j} \geq -\log(j!\|\gamma\|_{C^{d}}^{j}).
    \end{equation}
    To get an upper bound on \(k_{j}\), we'll show by induction that there is a
    permutation \(\sigma\) such that
    \begin{equation} \label{L_j's are close upper bound}
      |L_{\sigma, j}^{\gamma}(t)|
      \geq \frac{j!}{d!\|\gamma\|_{C^{d}}^{d - j}}|L_{d}^{\gamma}(t)|
	\quad \text{ for all } 1 \leq j \leq d.
    \end{equation}
    The base case of \(j = d\) is true for every permutation. In the induction
    step, suppose that \(\sigma(d), \dots, \sigma(j+2)\) have been specified
    (with nothing specified if \(j = d-1\)). Let \(M\) be the torsion matrix
    with the last \(d-j-1\) columns deleted and rows
    \(\sigma(d), \dots, \sigma(j+2)\) deleted. Then
    \(|\det M| = |L_{j+1}^{\sigma}|\). For \(1 \leq i, l \leq j\), let
    \(M_{i, l}\) be the minor of \(M\) obtained by deleting the \(i\)'th row
    and \(l\)'th column. Using the cofactor expansion, we find that
    \begin{equation*}
      |L_{j+1}^{\sigma}(t)|
      = \bigg|\sum_{i = 1}^{j+1}
	(-1)^{i+j+1}\gamma_{i}^{(d)}(t)\det M_{i, d}\bigg|
      \leq \|\gamma\|_{C^{d}} \sum_{i = 1}^{j+1} |\det M_{i, j+1}|.
    \end{equation*}
    Hence there is some \(i\) such that
    \begin{equation*}
      |\det M_{i, j+1}|
      \geq \frac{1}{(j+1)\|\gamma\|_{C^{d}}}|L_{j+1}^{\sigma}(t)|.
    \end{equation*}
    Thus, for every permutation \(\sigma\) that sends \(i\) to \(j+1\),
    \begin{equation*}
      |L_{\sigma, j}^{\gamma}(t)|
      \geq \frac{1}{(j+1)\|\gamma\|_{C^{d}}}|L_{j+1}^{\sigma}(t)|.
    \end{equation*}
    By the induction hypothesis,
    \begin{equation*}
      |L_{\sigma, j}^{\gamma}(t)|
      \geq \frac{1}{(j+1)\|\gamma\|_{C^{d}}}
	\frac{(j+1)!}{d!\|\gamma\|_{C^{d}}^{d-j-1}}|L_{d}^{\gamma}(t)|
      = \frac{j!}{d!\|\gamma\|_{C^{d}}^{d-j}}|L_{d}^{\gamma}(t)|.
    \end{equation*}
    This completes the induction, so~\eqref{L_j's are close upper bound} holds.
    Putting together~\eqref{L_j's are close lower bound}
    and~\eqref{L_j's are close upper bound},
    \begin{equation*} 
      -\log(j!\|\gamma\|_{C^{d}}^{j})
      \leq k_{j}
      \leq -\log\Big(\frac{j!}{d!\|\gamma\|_{C^{d}}^{d - j}}\Big) + k_{d}.
    \end{equation*}
    The lemma follows by setting \(A_{j} = -\log(j!\|\gamma\|_{C^{d}}^{j})\).
  \end{proof}
  We can combine Lemma~\ref{number of k_j} and the following lemma
  from~\cite{ChenFanWang} \(d\) times to prove
  Proposition~\ref{k decomposition lemma}.
  \begin{lemma} [\cite{ChenFanWang} Lemma 8] \label{ChenFanWang lemma 8}
    Let \(\varphi \in C^{1 / \alpha}(I)\) with \(\alpha > 0\). For every
    \(k \in \mathbb{Z}\), there exist disjoint intervals
    \(\{I_{k, j} \subseteq I\}_{j = 1}^{N_{k}}\) such that
    \(2^{-k - 2} \leq |\varphi(t)| \leq 2^{-k + 1}\) for all \(t \in I_{k, j}\) and
    \begin{equation*}
      \{t \in I : 2^{-k - 1} \leq |\varphi(t)| \leq 2^{-k}\}
      \subseteq \bigcup_{j = 1}^{N_{k}} I_{k, j};
    \end{equation*}
    moreover, there is a constant \(B_{\alpha}\) such that
    \(N_{k} \leq B_{\alpha} 2^{\alpha k}\) for every \(k\).
  \end{lemma}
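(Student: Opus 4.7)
The plan is to take each \(I_{k,j}\) to be a connected component of the ``corridor'' \(F_k = \{t \in I : 2^{-k-2} \leq |\varphi(t)| \leq 2^{-k+1}\}\) that meets the target set \(E_k = \{t \in I : 2^{-k-1} \leq |\varphi(t)| \leq 2^{-k}\}\). With this choice, both the size condition \(2^{-k-2} \leq |\varphi| \leq 2^{-k+1}\) on each \(I_{k,j}\) and the covering \(E_k \subseteq \bigcup_j I_{k,j}\) are built into the definition, so everything reduces to counting these components.

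To reach \(N_k \leq B_\alpha 2^{\alpha k}\), I would write \(1/\alpha = n + \beta\) with \(n = \lfloor 1/\alpha \rfloor\) and \(\beta = 1/\alpha - n\), and partition \(I\) into about \(|I|/r\) subintervals of length \(r = c \cdot 2^{-\alpha k}\), choosing \(c = c(\|\varphi\|_{C^{1/\alpha}}, \alpha)\) so that Taylor's theorem produces on each subinterval \(J\) a polynomial \(p_J\) of degree \(\leq n\) with \(\|\varphi - p_J\|_{L^\infty(J)} < 2^{-k-10}\). This partition contributes exactly the \(2^{\alpha k}\) factor; it remains to show that the number of components of \(F_k \cap J\) meeting \(E_k\) is \(O(n)\) per subinterval, after which summing over subintervals yields the claim.

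For the per-subinterval count, suppose \(m\) such components exist and pick \(u_1 < \cdots < u_m\) with \(u_i \in E_k\) lying in the \(i\)th component, together with intermediate points \(v_i \in (u_i, u_{i+1})\) where \(|\varphi(v_i)| \notin [2^{-k-2}, 2^{-k+1}]\). Since \(\|\varphi - p_J\|_\infty < 2^{-k-10}\), the values \(p_J^2(u_i)\) all lie in a narrow window around \(4^{-k-1}\), while each \(p_J^2(v_i)\) sits well outside that window (either below a constant times \(4^{-k-2}\) or above a constant times \(4^{-k+1}\)). Hence \(p_J^2\) has a local extremum, and therefore a critical point, in each \((u_i, u_{i+1})\); since \(\deg(p_J^2) \leq 2n\), it has at most \(2n - 1\) critical points, forcing \(m \leq 2n\). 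The degenerate case \(n = 0\) is handled directly: \(p_J\) is constant, so \(|\varphi|\) varies by at most \(2 \cdot 2^{-k-10}\) on \(J\), ruling out more than one component.

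The main obstacle is the regime \(1/\alpha > 1\), where a direct H\"older separation estimate between consecutive components merely gives \(|u_{i+1} - u_i| \gtrsim 2^{-k}\) and hence \(N_k \lesssim 2^k\), which is weaker than the target \(2^{\alpha k}\). The Taylor expansion plus the polynomial critical-point count is the device that converts the additional smoothness of \(\varphi\) into the stronger saving. In the H\"older regime \(1/\alpha \leq 1\) (\(n = 0\)) the polynomial approximation degenerates to a constant and the argument collapses to the direct H\"older separation bound \(|u_{i+1} - u_i| \gtrsim 2^{-\alpha k}\), which gives the same conclusion.
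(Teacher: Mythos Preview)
The paper does not supply its own proof of this lemma; it is quoted verbatim from \cite{ChenFanWang}, with only a remark recording the explicit constant \(B_{\alpha} = \|\varphi\|_{C^{1/\alpha}}^{\alpha}4^{1/\alpha+\alpha+4}\). So there is nothing in the present paper to compare your argument against directly.

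That said, your argument is correct and self-contained. Taking the \(I_{k,j}\) to be the components of the corridor \(F_k\) meeting \(E_k\) handles the covering and size conditions for free, and your counting mechanism---partitioning \(I\) into subintervals of length \(r = c\,2^{-\alpha k}\), Taylor-approximating \(\varphi\) on each by a degree-\(\le n\) polynomial \(p_J\) to accuracy \(2^{-k-10}\), and then bounding the number of relevant components in \(J\) by the number of critical points of \(p_J^2\)---is sound. The only small point to keep in mind is that the bound \(N_k \le B_\alpha 2^{\alpha k}\) for \emph{all} \(k\in\mathbb{Z}\) requires noting that \(E_k=\emptyset\) once \(2^{-k-1}>\|\varphi\|_\infty\), so the additive ``\(+1\)'' from the ceiling \(\lceil |I|/r\rceil\) is absorbed for the remaining \(k\).

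Tracking your constants (roughly \(\|\varphi\|_{C^{1/\alpha}}^{\alpha}|I|\,2^{\alpha k}\) subintervals, with at most \(2n \le 2/\alpha\) components each) produces a bound of the same shape as the one recorded in the paper's remark, so your approach is almost certainly the same one used in the cited reference.
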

  \begin{remark}
    The proof of the above lemma in~\cite{ChenFanWang} shows that we can take
    \begin{equation*}
      B_{\alpha}
      = \|\varphi\|_{C^{\frac{1}{\alpha}}}^{\alpha}4^{\frac{1}{\alpha}+\alpha+4}.
    \end{equation*}
  \end{remark}
  \begin{proof}[Proof of Proposition~\ref{k decomposition lemma}]
    For an interval \(J\), a permutation \(\sigma\), \(1 \leq j \leq d\), and
    \(k \in \mathbb{Z}\), let \(\mathcal{I}_{j}^{\sigma}(J, k)\) be the set of
    intervals from Lemma~\ref{ChenFanWang lemma 8} with
    \(\varphi = L_{\sigma, j}^{\gamma}\)
    and \(\frac{1}{\alpha} = N - j\). To simplify the notation, set
    \begin{equation*}
      B_{j}
      = \|\gamma\|_{C^{N}}^{\frac{1}{N-j}}4^{\frac{1}{N-j}+N-j+4}.
    \end{equation*}
    Then for any interval \(J\), the number of intervals of
    \(\mathcal{I}_{j}^{\sigma}(J, k)\) is at most \(B_{j}2^{\frac{k}{N - j}}\).
    Combining Lemma~\ref{number of k_j} with \(d\)-many applications of
    Lemma~\ref{ChenFanWang lemma 8}, we see that
    \begin{align*}
      &\{t \in \tilde{I} : 2^{-k_{d} - 1} \leq |L_{d}^{\gamma}(t)| \leq 2^{-k_{d}}\}
      \\
      &\subseteq \bigcup_{\sigma \in S_{d}}
	\bigcup_{I_{d} \in \mathcal{I}_{d}^{\sigma}(\tilde{I}, k_{d})} \hspace{-1em}
	\bigcup_{k_{d-1}=A_{d-1}}
	      ^{A_{d-1}+k_{d}+\log(d!\|\gamma\|_{C^{d}}^{d})} \hspace{-1.2em}
	\cdots 
	\bigcup_{I_{2} \in \mathcal{I}_{2}^{\sigma}(I_{3}, k_{2})} \hspace{-1em}
	\bigcup_{k_{1}=A_{1}}^{A_{1}+k_{d}+\log(d!\|\gamma\|_{C^{d}}^{d})}
	  \hspace{-1em}
	\bigcup_{I_{1} \in \mathcal{I}_{1}^{\sigma}(I_{2}, k_{1})} \hspace{-1em}
	I_{1}.
    \end{align*}
    Whenever \(j < d\), the total number of intervals \(I_{j}\) in each pair of
    unions
    \begin{equation} \label{pair of unions}
      \bigcup_{k_{j} = A_{j}}^{A_{j}+k_{d}+\log(d!\|\gamma\|_{C^{d}}^{d})}
	\hspace{-1em}
      \bigcup_{I_{j} \in \mathcal{I}_{j}^{\sigma}(I_{j+1}, k_{j})} \hspace{-1.25em}
	I_{j},
    \end{equation}
    is bounded by
    \begin{equation*}
      \sum_{k_{j} = A_{j}}^{k_{d}+A_{j}+\log(d!\|\gamma\|_{C^{d}}^{d})}
	B_{j}2^{\frac{k_{j}}{N-j}}
      \leq B_{j}2^{\frac{A_{j}}{N-j}}
	\frac{2^\frac{k_{d}+\log(d!\|\gamma\|_{C^{d}}^{d})+1}{N-j}}
	{2^{\frac{1}{N-j}}-1}.
    \end{equation*}
    Recalling that all logarithms are taken in base 2 and using the fact that 
    \(N-j > 1\),
    \begin{equation*}
      B_{j}2^{\frac{A_{j}}{N-j}}
	\frac{2^\frac{k_{d}+\log(d!\|\gamma\|_{C^{d}}^{d})+1}{N-j}}
	{2^{\frac{1}{N-j}}-1}
      \leq C\|\gamma\|_{C^{N}}^{\frac{1}{N-j}}4^{N}
	\bigg(\frac{d!\|\gamma\|_{C^{d}}^{d}}
	{j!\|\gamma\|_{C^{d}}^{j}}\bigg)^{\frac{1}{N-j}}
	2^{\frac{k_{d}}{N-j}}
      \leq C\|\gamma\|_{C^{N}}^{\frac{1}{N-j}}4^{N}
	\|\gamma\|_{C^{d}}^{\frac{d-j}{N-j}}
	2^{\frac{k_{d}}{N-j}}.
    \end{equation*}
    Hence, the set
    \(\{t \in I : 2^{-k_{d} - 1} \leq |L_{d}^{\gamma}(t)| \leq 2^{-k_{d}}\}\) is
    covered by at most
    \begin{equation*}
      C4^{Nd+\frac{1}{N-d}}\|\gamma\|_{C^{N}}^{\sum_{j=1}^{d}\frac{1}{N-j}}
	\|\gamma\|_{C^{d}}^{\sum_{j=1}^{d-1}\frac{d-j}{N-j}}
	2^{k_{d}\sum_{j=1}^{d}\frac{1}{N-j}}
      = C_{N, \gamma}2^{k_{d}\sum_{j=1}^{d}\frac{1}{N-j}}
    \end{equation*}
    many intervals that satisfy~\eqref{size of L_j}. Moreover, the sum of the lengths
    of the intervals in each pair of unions~\eqref{pair of unions} is bounded by
    \begin{equation*}
      (k_{d}+\log(d!\|\gamma\|_{C^{d}}^{d}))|I_{j+1}|,
    \end{equation*}
    since the intervals in \(\mathcal{I}_{j}^{\sigma}(I_{j+1}, k_{j})\) are disjoint
    for every \(k_{j}\). Therefore, the total length of all the intervals in the
    initial decomposition at scale \(k_{d}\) is at most
    \begin{equation*}
      d!(k_{d}+\log(d!\|\gamma\|_{C^{d}}^{d}))^{d}|\tilde{I}|
      = C(k_{d}+C_{\gamma})^{d}.
      \qedhere
    \end{equation*}
  \end{proof}
\subsection*{The secondary decomposition}
  We now proceed to the general \(h \in \mathbb{R}^{m}\) in
  Proposition~\ref{full decomposition proposition}. Our initial decomposition gave
  a family of intervals where \(|L_{j, \sigma}^{\gamma}| \approx 2^{-k_{j}}\). We
  finish the proof of Proposition~\ref{full decomposition proposition} by applying
  the following proposition to each \(\zeta_{j} = (\gamma_{1}, \dots, \gamma_{j})\)
  in turn. We need to ensure the intervals in the initial decomposition are small
  for this proposition. By the upper
  bounds~\eqref{initial decomposition number and length of intervals} on the total
  length and the number of intervals, we can freely shrink the intervals to be of
  size at most \(C_{N, \gamma}2^{-k_{d}\sum_{j=1}^{d}\frac{1}{N-j}}\) while
  retaining the necessary upper
  bound~\eqref{full decomposition number of intervals} on the total number of
  intervals.
  \begin{proposition}
    Let \(I = [a, b]\) with \(b-a \leq 1\) and let
    \(\zeta \in C^{N}(I; \mathbb{R}^{j})\). There is a
    constant \(A\) depending only on \(N\) and \(j\) such that if
    \(|L_{j}^{\zeta}(t)| \approx 2^{-k}\) on \(I\) and
    \begin{equation} \label{size of intervals}
      b-a
      \leq A^{\frac{1}{N-j}}\|\zeta\|_{C^{N}}^{\frac{-j}{N-j}}
        2^{\frac{-k}{N-j}},
    \end{equation}
    there is a decomposition \(I = \cup_{i=1}^{C_{N}} I_{i}\) into disjoint
    intervals such that for every \(m \in \mathbb{N}\) and \(h \in \mathbb{R}^{m}\),
    \begin{equation*}
      |L_{j}^{\zeta_{h}}(t)| \approx 2^{-k}
      \quad \forall t \in (I_{i})_{h}.
    \end{equation*}
  \end{proposition}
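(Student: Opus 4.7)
The strategy is a polynomial approximation argument combined with a rigid uniform-in-$h$ decomposition for polynomial curves (Stovall's Lemma 2.3). I would fix any base point $t_0 \in I$ and let $P$ be the Taylor polynomial of $\zeta$ at $t_0$ of degree $\lfloor N \rfloor$, writing $\zeta = P + R$. Since $\zeta \in C^{N}(I;\mathbb{R}^{j})$, for each $0 \leq i \leq j$ standard Taylor-with-Hölder-remainder gives
\begin{equation*}
  \|R^{(i)}\|_{L^{\infty}(I)} \leq C_{N,j}\|\zeta\|_{C^{N}}(b-a)^{N-i}.
\end{equation*}
Because $P$ is a polynomial of degree depending only on $N$, so is $P_{h}$; in particular $L_{j}^{P_{h}}(t)$ is a polynomial in $(t,h)$ of bounded complexity, and this is what makes the uniform step possible.

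\paragraph{Step 1: Propagation of the error to $L_{j}$.}
Using multilinearity of the determinant and the triangle inequality in each column, I would write
\begin{equation*}
  L_{j}^{\zeta_{h}}(t) - L_{j}^{P_{h}}(t)
  = \sum_{\text{mixed}} \det[\,\cdots\,],
\end{equation*}
where every summand has at least one column that is a derivative of $R_{h}$. Combined with $\|\zeta_{h}\|_{C^{j}} \leq \|\zeta\|_{C^{j}}$ and $\|(R_{h})^{(i)}\|_{L^{\infty}((I)_{h})} \leq \|R^{(i)}\|_{L^{\infty}(I)}$ (both from the fact that $\zeta_{h}$ and $R_{h}$ are averages of translates), this gives a bound of the form
\begin{equation*}
  \sup_{h}\|L_{j}^{\zeta_{h}} - L_{j}^{P_{h}}\|_{L^{\infty}((I)_{h})}
  \leq C_{N,j}\|\zeta\|_{C^{N}}^{j}(b-a)^{N-j}.
\end{equation*}
Under the hypothesis \eqref{size of intervals}, the right-hand side is at most $C_{N,j}A \cdot 2^{-k}$, which is $\leq \tfrac{1}{10}\cdot 2^{-k}$ provided the absolute constant $A = A(N,j)$ is chosen small enough.

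\paragraph{Step 2: Uniform decomposition for the polynomial.}
Now the problem reduces to decomposing $I$ into a bounded (in $N$) number of subintervals $I_{i}$ on which $|L_{j}^{P_{h}}(t)| \approx 2^{-k}$ uniformly in $m$ and $h$. This is exactly the content of Lemma~2.3 of \cite{Stovall}, applied to the polynomial $P$: because $L_{j}^{P_{h}}(t)$ is a polynomial in $(t,h)$ whose degree is controlled by $\deg P \leq \lfloor N\rfloor$, its zero set projected to the $t$-variable is cut out by finitely many polynomial equations of bounded degree, and one obtains a partition $I = \bigcup_{i=1}^{C_{N}} I_{i}$ on each piece of which $|L_{j}^{P_{h}}|$ is comparable to $|L_{j}^{P}|$ (with the latter comparable to $2^{-k}$ thanks to Step~1 applied to the case $h=0$). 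Stitching the error bound from Step~1 and the triangle inequality then yields $|L_{j}^{\zeta_{h}}(t)| \approx 2^{-k}$ on each $(I_{i})_{h}$.

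\paragraph{Main obstacle.}
Step~1 is essentially a quantitative version of a routine Taylor computation; the real content, and the reason the statement is true at all, lies in Step~2. The hard part is to extract from Lemma~2.3 of \cite{Stovall} a decomposition whose cardinality $C_{N}$ depends only on $N$ (equivalently, on the degree of $P$) and not on the dimension parameter $m$ or on the shift vector $h$. This is precisely where the polynomiality of $P$ is used: the family $\{L_{j}^{P_{h}}\}_{h}$ lives in a finite-dimensional family of polynomials in $t$ of bounded degree, so their bad sets (where $L_{j}^{P_{h}}$ is small relative to $L_{j}^{P}$) are controlled uniformly across $h$ by a Bezout-type bound. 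Verifying that Stovall's lemma really delivers this uniformity, and that the small error from Step~1 does not accumulate across the $C_{N}$ pieces, is the technical crux.
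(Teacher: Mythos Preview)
Your proposal is correct and follows essentially the same route as the paper: Taylor-expand $\zeta$ to a degree-$\lfloor N\rfloor$ polynomial $P$, use multilinearity of the determinant together with the remainder bound $|R^{(i)}|\le C\|\zeta\|_{C^N}(b-a)^{N-i}$ to get $|L_j^{\zeta_h}-L_j^{P_h}|\le C_{N,j}\|\zeta\|_{C^N}^{j}(b-a)^{N-j}\le C_{N,j}A\,2^{-k}$ uniformly in $h$, and then invoke Lemma~2.3 of \cite{Stovall} on the polynomial part. The only point the paper makes explicit that you leave implicit is the affine rescaling $P\mapsto Q$ to $[-1,1]$ with $|L_j^{Q}|\approx 1$, which is the normalization under which Stovall's lemma is stated; this is routine and does not affect your argument.
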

  \begin{proof}
  By Taylor's theorem, for any \(t \in I\),
  \begin{align*}
    \zeta(t)
    &= \sum_{i=0}^{\lfloor N \rfloor-1} \frac{\zeta^{(i)}(a)}{i!}(t-a)^{i}
      +\frac{\zeta^{(\lfloor N \rfloor)}(z_{t})}{\lfloor N \rfloor!}
      (t-a)^{\lfloor N \rfloor}
    \\
    &= \sum_{i=0}^{\lfloor N \rfloor} \frac{\zeta^{(i)}(a)}{i!}(t-a)^{i}
      +\frac{\zeta^{(\lfloor N \rfloor)}(z_{t})
      -\zeta^{(\lfloor N \rfloor)}(a)}{\lfloor N \rfloor!}(t-a)^{\lfloor N \rfloor}.
  \end{align*}
  Set
  \begin{equation*}
    P(t) = \sum_{i=0}^{\lfloor N \rfloor} \frac{\zeta^{(i)}(a)}{i!}(t-a)^{i}
    \quad \text{and} \quad
    R(t) = \frac{\zeta^{(\lfloor N \rfloor)}(z_{t})
      -\zeta^{(\lfloor N \rfloor)}(t)}{\lfloor N \rfloor!}(t-a)^{\lfloor N \rfloor}.
  \end{equation*}
  Then \(R \in C^{N}(I)\), \(R^{(i)}(a) = 0\) for
  \(0 \leq i \leq \lfloor N \rfloor\), and
  \begin{equation} \label{remainder bound}
    |R^{(i)}(t)| \leq |t-a|^{N-i}\|\zeta\|_{C^{N}}
    \quad \forall t \in I.
  \end{equation}
  For any \(h \in \mathbb{R}^{m}\) and \(t \in I_{h}\),
  \begin{equation} \label{L_h,j}
    \begin{aligned}
      L_{j}^{\zeta_{h}}(t)
      &= \det[P_{h}'(t)+R_{h}'(t), \dots, P_{h}^{(j)}(t)+R_{h}^{(j)}(t)]
      \\
      &= \det[P_{h}'(t), \dots, P_{h}^{(j)}(t)]
      \\
      &\quad +\det[R_{h}'(t),\ P_{h}''(t), \dots, P_{h}^{(j)}(t)]
      \\
      &\quad
        +\det[P_{h}'(t)+R_{h}'(t),\ R_{h}''(t),\ P_{h}'''(t), \dots, P_{h}^{(j)}(t)]
      \\
      &\quad \dots
      \\
      &\quad +\det[P_{h}'(t)+R_{h}'(t), \dots, P_{h}^{(j-1)}(t)+R_{h}^{(j-1)}(t),\
        R_{h}^{(j)}(t)]
      \\
      &\eqqcolon L^{P_{h}}_{j}(t)+L_{P, R, j, h}(t).
    \end{aligned}
  \end{equation}
  We will show that \(|L_{P, R, j, h}| \lesssim 2^{-k}\) with an implicit constant
  depending on \(A\) in~\eqref{size of intervals} (which we can make as small
  as we need), and then we will divide \(I\) into intervals where
  \(|L_{j}^{P_{h}}| \approx 2^{-k}\). Combined, these imply that
  \(|L_{j}^{\zeta_{h}}| \approx 2^{-k}\). We start with \(|L_{P, R, j, h}|\).
  Applying~\eqref{remainder bound}, we see that
  \begin{equation*}
    |L_{P, R, j, h}(t)| \leq C|I|^{N-j}\|\zeta\|_{C^{N}}^{j}
    \quad \forall t \in I_{h}.
  \end{equation*}
  Let \(A\) be a small constant to be chosen shortly. Assuming the
  bound~\eqref{size of intervals} on the size of each interval holds, we
  conclude that
  \begin{equation} \label{L_P,R,j,h}
    |L_{P, R, j, h}(t)| \leq AC2^{-k}
    \quad \forall t \in I_{h}.
  \end{equation}
  The next lemma will give a decomposition to deal with \(|L_{j}^{P_{h}}|\).
  \begin{lemma} [\cite{Stovall}, Lemma 2.3] \label{Stovall lemma 2.3}
    Fix \(j \geq 2\) and \(l \in \mathbb{N}\). There exists a decomposition of
    \([-1, 1]\) into disjoint intervals \(I_{i}\), \(1 \leq i \leq C_{l}\),
    such that for every \(I_{i}\) and every degree \(l\) polynomial
    \(Q \colon \mathbb{R} \rightarrow \mathbb{R}^{j}\) satisfying
    \begin{equation*}
      |L^{Q}_{j}(t)| \approx 1, \quad t \in [-1, 1],
    \end{equation*}
    every offspring curve \(Q_{h}(t)\) satisfies
    \begin{equation*}
      |L^{Q_{h}}_{j}(t)| \approx_{l, j} 1
      \quad \forall t \in (I_{i})_{h}.
    \end{equation*}
  \end{lemma}
  For \(t \in [-1, 1]\), set
  \begin{equation*}
    Q(t)
    = 2^{\frac{k}{j}}\Big(\frac{2}{b-a}\Big)^{\frac{j+1}{2}}
      \bigg(P_{1}\Big(\frac{b-a}{2}(t+1)+a\Big), \dots,
      P_{j}\Big(\frac{b-a}{2}(t+1)+a\Big)\bigg)
  \end{equation*}
  Before we can apply Lemma~\ref{Stovall lemma 2.3}, we calculate
  \begin{equation*}
    |L^{Q}_{j}(t)|
    = \Big|2^{k}\Big(\frac{2}{b-a}\Big)^{\frac{j(j+1)}{2}}
      \Big(\frac{b-a}{2}\Big)^{\frac{j(j+1)}{2}}
      L^{P}_{j}\Big(\frac{b-a}{2}(t+1)+a\Big)\Big|
    \quad \forall t \in [-1, 1].
  \end{equation*}
  By the calculation in~\eqref{L_h,j} with \(h = 0\), we have
  \begin{equation*}
    L_{j}^{\zeta}(t) = L_{j}^{P}(t)+L_{P, R, j, 0}(t).
  \end{equation*}
  Since \(|L_{j}^{\zeta}| \approx 2^{-k}\) and \(|L_{P, R, j, 0}(t)| \leq AC2^{-k}\)
  by~\eqref{L_P,R,j,h}, we can choose \(A\) small enough that
  \(|L^{P}_{j}| \approx 2^{-k}\). Therefore,
  \begin{equation*}
    |L^{Q}_{j}(t)| \approx 1
    \quad \forall t \in [-1, 1].
  \end{equation*}
  Applying Lemma~\ref{Stovall lemma 2.3}, we obtain a decomposition
  \([-1, 1] = \cup_{i=1}^{C_{N}} J_{i}\) into disjoint
  intervals such that
  \begin{equation*}
    |L^{Q_{h}}_{j}(t)| \approx 1
    \quad \forall t \in J_{i},
  \end{equation*}
  Setting
  \begin{equation*}
    I_{i} = \Big\{t \in I : \frac{b-a}{2}(t-1)+a \in J_{i}\Big\},
  \end{equation*}
  we see that
  \begin{equation} \label{L^P_h_j}
    |L^{P_{h}}_{j}(t)| \approx 2^{-k}
    \quad \forall t \in I_{i}.
  \end{equation}
  As before, we can choose \(A\) small enough that we can
  combine~\eqref{L_h,j},~\eqref{L_P,R,j,h}, and~\eqref{L^P_h_j} to conclude
  \(|L_{j}^{\zeta_{h}}(t)| \approx 2^{-k}\).
  \end{proof}
  \section{The Geometric Inequality}
%
To finish the proof of Theorem~\ref{strong geometric theorem}, we need to show that
\(\Phi_{\gamma_{h}}\) given in~\eqref{sum of gammas} is 1-to-1 for
\(t_{1} < \dots < t_{d}\) and that the geometric
inequality~\eqref{geometric inequality} holds on each interval given
in Lemma~\ref{k decomposition lemma}. The following proposition shows that the
geometric inequality~\eqref{geometric inequality} holds on each interval in our
decomposition for \(\gamma\), all offspring curves \(\gamma_{h}\), and all
truncations of these curves.
\begin{proposition} \label{geometric inequality truncation proposition}
  Let \(I \subset \mathbb{R}\), \(n \in \mathbb{N}\),
  \(\zeta \colon I \rightarrow \mathbb{R}^{n}\), and
  \begin{equation*}
    \Phi_{\zeta}(t_{1} \cdots t_{n})
    = \zeta(t_{1}) + \cdots + \zeta(t_{n}).
  \end{equation*}
  Assume that there are \(k_{j} \in \mathbb{Z}\), \(1 \leq j \leq n\), such that
  \begin{equation} \label{size of L_j^zeta}
    |L_{j}^{\zeta}| \approx 2^{-k_{j}} \quad \text{on } I.
  \end{equation}
  Then the Jacobian
  \begin{equation*}
    J_{\Phi_{\zeta}}(t_{1}, \dots, t_{n}) = \det[\zeta'(t_{1}) \dots \zeta'(t_{n})]
  \end{equation*}
  satisfies
  \begin{equation*} 
    |J_{\Phi_{\zeta}}(t_{1}, \dots, t_{n})|
    \approx_{n} 2^{-k_{n}}|v(t_{1}, \dots, t_{n})|
  \end{equation*}
  for all \((t_{1}, \dots, t_{n}) \in I^{n}\).
\end{proposition}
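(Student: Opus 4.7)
The plan is to factor $J_{\Phi_{\zeta}}$ as a Vandermonde times a divided-difference determinant, use the Hermite-Genocchi formula to reduce to a mixed-evaluation torsion determinant, and then handle that by induction on $n$. As a preliminary reduction, pre-multiplying $\zeta$ by the diagonal matrix $\rho = \mathrm{diag}(2^{k_{1}}, 2^{k_{2} - k_{1}}, \ldots, 2^{k_{n} - k_{n-1}})$ (with $k_{0} = 0$) satisfies $L_{j}^{\rho\zeta} = 2^{k_{j}} L_{j}^{\zeta}$ and $J_{\Phi_{\rho\zeta}} = 2^{k_{n}} J_{\Phi_{\zeta}}$, so I may assume without loss of generality that $|L_{j}^{\zeta}| \approx 1$ on $I$ and aim for $|J_{\Phi_{\zeta}}| \approx |v|$. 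The classical column-operation identity
\[
  \det[f_{i}(t_{j})]_{i, j = 1}^{n}
  = v(t_{1}, \ldots, t_{n}) \det[f_{i}[t_{1}, \ldots, t_{j}]]_{i, j = 1}^{n},
\]
applied with $f_{i} = \zeta_{i}'$, then yields $J_{\Phi_{\zeta}} = v \cdot \det M$ where $M_{ij} = \zeta_{i}'[t_{1}, \ldots, t_{j}]$, reducing the task to $|\det M| \approx 1$.

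Next, the Hermite-Genocchi formula $\zeta_{i}'[t_{1}, \ldots, t_{j}] = \int_{\Delta^{j - 1}} \zeta_{i}^{(j)}(s \cdot t) \, d\sigma(s)$ represents the $j$-th column of $M$ as an integral of $\zeta^{(j)}(\eta_{j})$ over $\eta_{j}$ in the convex hull of $\{t_{1}, \ldots, t_{j}\} \subseteq I$, and multilinearity of the determinant in columns gives
\[
  \det M
  = \int_{\Delta^{0} \times \cdots \times \Delta^{n - 1}}
    \det\bigl[\zeta^{(1)}(\eta_{1}), \ldots, \zeta^{(n)}(\eta_{n})\bigr]
    \, d\sigma.
\]
It therefore suffices to prove the \textbf{main lemma}: under $|L_{j}^{\zeta}| \approx 1$ on $I$, the mixed-evaluation determinant $\det B(\eta) := \det[\zeta^{(1)}(\eta_{1}), \ldots, \zeta^{(n)}(\eta_{n})]$ satisfies $|\det B(\eta)| \approx 1$ with constant sign for every $(\eta_{1}, \ldots, \eta_{n}) \in I^{n}$.

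I prove the main lemma by induction on $n$: the base case $n = 1$ is $|\zeta_{1}'(\eta_{1})| = |L_{1}^{\zeta}(\eta_{1})| \approx 1$. For the inductive step, the truncation $\tilde\zeta = (\zeta_{1}, \ldots, \zeta_{n - 1})$ inherits $|L_{j}^{\tilde\zeta}| = |L_{j}^{\zeta}| \approx 1$ for $j \leq n - 1$, so by induction every leading $k \times k$ principal minor $B_{k}(\eta_{1}, \ldots, \eta_{k})$ of $B(\eta)$ for $k \leq n - 1$ satisfies $|B_{k}| \approx 1$ of constant sign; in particular the first $n - 1$ pivots $u_{kk} = B_{k}/B_{k - 1}$ in the LU factorization of $B(\eta)$ are $\approx 1$. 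Combining this with the diagonal value $\det B(t, t, \ldots, t) = L_{n}^{\zeta}(t) \approx \pm 1$ (of constant sign by hypothesis) and the continuity of $\det B$ on the compact connected set $I^{n}$ controls the final pivot $u_{nn}$, hence $|\det B|$. The main obstacle is precisely this final-pivot control: one must rule out vanishing of $\det B(\eta)$ anywhere on $I^{n}$ from only constant-sign, nonvanishing controls on the lower-order minors together with the diagonal value $L_{n}^{\zeta}$. Once the main lemma is in hand, integrating its constant-sign, bounded-below integrand over the Hermite-Genocchi simplices gives $|\det M| \approx 1$, completing the proof.
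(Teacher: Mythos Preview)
Your setup via divided differences and Hermite--Genocchi is correct and is a clean alternative route: the normalization, the factorization \(J_{\Phi_\zeta} = v\cdot\det M\), and the integral representation \(\det M = \int \det B(\eta)\,d\sigma\) are all valid. But the ``main lemma'' is false under the stated hypotheses, so the reduction to pointwise control of \(\det B\) cannot close. Take \(n=2\) and \(\zeta'(t)=e^{At}(\cos t,\sin t)\) on \(I=[-\tfrac{1}{2A},\tfrac{1}{2A}]\). Then \(L_1^\zeta(t)=e^{At}\cos t\) and \(L_2^\zeta(t)=e^{2At}\) are both \(\approx 1\) with max/min ratio at most \(e^2<8\), well inside the constants the decomposition actually produces. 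Yet
\[
\det B(\eta_1,\eta_2)=e^{A(\eta_1+\eta_2)}\bigl[A\sin(\eta_2-\eta_1)+\cos(\eta_2-\eta_1)\bigr]
\]
vanishes at \(\eta_2-\eta_1=-\arctan(1/A)\in(-\tfrac{1}{A},0)\subset I-I\), and this zero lies in the Hermite--Genocchi domain (choose \(t_1>t_2\)). So \(\det B\) genuinely changes sign on \(I^2\); the obstacle you flagged is not a missing argument but an actual failure of the claim. Knowing \(|B_{n-1}|\approx 1\) and \(\det B\approx 1\) on the diagonal simply does not prevent off-diagonal zeros, because the hypotheses put no bound on the size of individual columns \(\zeta^{(j)}\).

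The paper avoids this by never seeking pointwise control of a mixed-evaluation determinant. It uses the Dendrinos--Wright functions \(f_{i,j}=f_{i,j-1}'/f_{j,j-1}'\) and Sylvester's identity to write \(J_{\Phi_\zeta}\) as an \(n\)-fold iterated integral \(\mathfrak{I}_n\), then proves by induction on \(m\) that \(\mathfrak{I}_m\) has constant sign with \(|\mathfrak{I}_m|\approx 2^{c_m(k)}|v(t_1,\dots,t_m)|\). The point is that the integrand at stage \(m\) is \(\mathfrak{I}_{m-1}\), already sign-definite and sized by the previous step, so the integral inherits both sign and size; no pointwise determinant estimate is ever needed. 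To rescue your approach you would have to abandon the pointwise main lemma and estimate the simplex integrals directly, which in effect rebuilds the paper's iterated-integral argument.
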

The above proposition shows that if \(I\) is an interval in the decomposition,
\(1 \leq n \leq d\), and \(\zeta = ((\gamma_{h})_{1}, \dots, (\gamma_{h})_{n})\),
then the Jacobian \(J_{\Phi_{\zeta_{h}}}\) is single-signed and nonzero in the
region \(A = \{(t_{1}, \dots, t_{n}) \in I^{d} : t_{1} < \dots < t_{n}\}\).
With that, an argument of Steinig~\cite{Steinig}
(see also~\cite{CarberyVanceWaingerWatsonWright, DendrinosWright}) shows that
\(\Phi_{\gamma_{h}}\) is 1-to-1 on \(A\).
\begin{proposition} [Steinig]
  \(\Phi_{\gamma_{h}}\) is 1-to-1 on
  \(A = \{(t_{1}, \dots, t_{d}) \in I^{d} : t_{1} < \dots < t_{d}\}\).
\end{proposition}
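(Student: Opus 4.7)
The plan is to adapt Steinig's classical Rolle-iteration argument. The key input is the preceding Proposition~\ref{geometric inequality truncation proposition}: applied to each truncation $\zeta = ((\gamma_h)_{\sigma(1)}, \dots, (\gamma_h)_{\sigma(j)})$, with $\sigma$ the permutation supplied by the decomposition, it gives that $J_{\Phi_{\zeta_h}}$ is nonvanishing, hence single-signed, on the open $j$-simplex in $(I_h)^j$. Equivalently, every minor $L_{\sigma, j}^{\gamma_h}$ retains a single sign across $I_h$.

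I proceed by induction on $d$. The base case $d = 1$ is immediate from the single-signedness of $(\gamma_h)_{\sigma(1)}'$, which forces $\gamma_h$ to be strictly monotone and so 1-to-1. For the inductive step, argue by contradiction: suppose $\Phi_{\gamma_h}(\mathbf{t}) = \Phi_{\gamma_h}(\mathbf{s})$ for distinct $\mathbf{t}, \mathbf{s} \in A$. Cancelling any common entries (which preserves the equation coordinate-wise) reduces me to the case in which $\{t_1, \dots, t_n\}$ and $\{s_1, \dots, s_n\}$ are disjoint for some $1 \leq n \leq d$. Listing the union in increasing order as $u_1 < u_2 < \cdots < u_{2n}$ and setting $\epsilon_k = +1$ or $-1$ according to whether $u_k$ is a $t$-entry or an $s$-entry, I obtain the nontrivial signed relation
\begin{equation*}
  \sum_{k = 1}^{2n} \epsilon_k \gamma_h(u_k) = 0 \in \mathbb{R}^d,
\end{equation*}
whose sign sequence $(\epsilon_k)$ contains at least one sign change.

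Next I iterate Rolle's theorem $j$ times, for $j = 1, 2, \dots, n$. At the $j$-th stage I read off the $\sigma(j)$-th scalar equation and combine it with those inherited from earlier stages; the single-signedness of $L_{\sigma, j}^{\gamma_h}$ then promotes each sign change of the current signed combination into a zero of a signed linear combination of $j$-th derivatives. After at most $n \leq d$ iterations the process produces an interior zero of the minor $L_{\sigma, n}^{\gamma_h}$ on $I_h$, contradicting $|L_{\sigma, n}^{\gamma_h}| \approx 2^{-k_n} > 0$ supplied by the preceding Proposition.

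The main difficulty is the combinatorial bookkeeping in the iterated Rolle step: one must verify that the sign-change structure of the relevant linear combination of derivatives is preserved and stays nontrivial from one iteration to the next, so that at least one new zero is produced at each stage. This is precisely the content of Steinig's original argument, and since we have arranged for every $L_{\sigma, j}^{\gamma_h}$ to be nonvanishing on $I_h$, Steinig's scheme carries over without essential change (see also the expositions in~\cite{CarberyVanceWaingerWatsonWright} and~\cite{DendrinosWright}).
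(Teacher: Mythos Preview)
Your setup (cancel common entries, arrange the remaining points as $u_1<\cdots<u_{2n}$ with signs $\epsilon_k$, obtain $\sum_k\epsilon_k\gamma_h(u_k)=0$) matches the paper's, and the hypotheses you invoke are the right ones. But from that point on the paper does \emph{not} iterate Rolle's theorem. Instead it performs a single Abel summation to rewrite the relation as $\int_{u_1}^{u_{2n}}\phi(u)\gamma_h'(u)\,du=0$, where $\phi$ is the step function of partial sums $\alpha_l=\sum_{j\le l}\epsilon_j$; then it observes that $\phi$ has at most $d-1$ sign changes, lets $I_1,\dots,I_n$ ($n\le d$) be the maximal intervals of constant nonzero sign of $\phi$, and forms the $n\times n$ matrix $M$ with entries $\int_{I_i}|\phi|(\gamma_h)_j'$. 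The integral identity makes the rows of $M$ linearly dependent, so $\det M=0$; on the other hand $\det M$ expands as $\int_{I_1}\!\cdots\!\int_{I_n}|\phi(u_1)\cdots\phi(u_n)|\,J_{\Phi_\zeta}(u_1,\dots,u_n)\,du$, which is strictly nonzero because Proposition~\ref{geometric inequality truncation proposition} makes $J_{\Phi_\zeta}$ single-signed on the ordered region. This determinant route avoids entirely the ``combinatorial bookkeeping'' you flag as the main difficulty: no tracking of sign changes through successive differentiations is needed, and the argument is fully self-contained rather than deferred to \cite{CarberyVanceWaingerWatsonWright,DendrinosWright}. Two small corrections to your write-up: the ``induction on $d$'' framing is not what you actually do (you reduce to a relation of length $2n\le 2d$, not to a lower-dimensional instance of the proposition), and the bound $|L_{\sigma,n}^{\gamma_h}|\approx 2^{-k_n}$ comes from the decomposition (Proposition~\ref{full decomposition proposition}), not from Proposition~\ref{geometric inequality truncation proposition}.
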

For the convenience of the reader, we recall Steinig's argument.
\begin{proof}
  Assume for contradiction that there are \(\vec{s} \neq \vec{t} \in A\) such
  that
  \begin{equation} \label{Steinig contradiction}
    \gamma_{h}(s_{1}) + \dots + \gamma_{h}(s_{d})
    \neq \gamma_{h}(t_{1}) + \dots + \gamma_{h}(t_{d}).
  \end{equation}
  We can rewrite~\eqref{Steinig contradiction} as
  \begin{equation*}
    \sum_{j = 1}^{m} \epsilon_{j}\gamma_{h}(u_{j}) = 0
  \end{equation*}
  for some even integer \(m \in [2, 2d]\), \(u_{1} < \dots < u_{m} \in I\),
  \(\epsilon_{j} \in \{-1, 1\}\), and \(\sum_{j = 1}^{m} \epsilon_{j} = 0\).
  Let
  \begin{equation*}
    \alpha_{l} = \sum_{j=1}^{l} \epsilon_{j}, \quad 1 \leq l \leq m.
  \end{equation*}
  Then the sequence of \(\alpha_{l}\)'s has at most \(d-1\) changes of sign.
  Define the step function \(\phi(u)\) to be \(\alpha_{j}\) when
  \(u \in (u_{j}, u_{j+1})\). We have
  \begin{equation} \label{linear dependence}
    0
    = \sum_{j = 1}^{m} \epsilon_{j}\gamma_{h}(u_{j})
    = \sum_{j = 1}^{m-1} \alpha_{j}[\gamma_{h}(u_{j}) - \gamma_{h}(u_{j+1})]
    = -\int_{u_{1}}^{u_{m}} \phi(u)\gamma_{h}'(u) \mathrm{d} u.
  \end{equation}
  Let \(I_{i}\), \(1 \leq i \leq n\) be the ordered, maximal intervals where
  \(\phi\) is constant and nonzero. Since the sequence of \(\alpha_{l}\)'s has
  at most \(d-1\) changes of sign, \(n \leq d\). Let \(M\) be
  the \(n \times n\) matrix whose \((i, j)\)'th entry is given by
  \begin{equation*}
    \int_{I_{i}} |\phi(u)|(\gamma_{h})_{j}'(u) \mathrm{d} u.
  \end{equation*}
  Setting \(\zeta = ((\gamma_{h})_{1}, \dots, (\gamma_{h})_{n})\),
  \begin{equation*}
    \det M
    = \itint{I_{1}}{\ }{I_{n}}{} |\phi(u_{1})| \dots |\phi(u_{n})|
      \det[\zeta'(u_{1}) \cdots \zeta'(u_{n})]
      \mathrm{d} u_{1} \dots \mathrm{d} u_{n}.
  \end{equation*}
  By~\eqref{linear dependence}, the rows of \(M\) are linearly dependent, so
  \(\det M = 0\). On the other hand, \(J_{\Phi_{\zeta}}\) is single-signed and
  nonzero. Thus
  \begin{equation*}
    0
    = \det M
    = \itint{I_{1}}{\ }{I_{n}}{} |\phi(u_{1})| \dots |\phi(u_{n})|
      \det[\zeta'(u_{1}) \cdots \zeta'(u_{n})]
      \mathrm{d} u_{1} \dots \mathrm{d} u_{n}
    > 0,
  \end{equation*}
  so we have reached a contradiction.
\end{proof}
\subsection*{Proof of Proposition~\ref{geometric inequality truncation proposition}}
  The proof comes in two steps. Both parts of this proof are adaptations of methods
  in~\cite{DendrinosWright}. Some minor differences arise because we are not
  dealing with polynomials.
  
  First, we will define a sequence of iterated integrals
  \(\mathfrak{I}_{1}, \dots, \mathfrak{I}_{n}\) such that
  \begin{equation} \label{jacobian equals iterated integral}
    J_{\Phi_{\zeta}}(t_{1}, \dots, t_{n})
    = \mathfrak{I}_{n}(t_{1}, \dots, t_{n}).
  \end{equation}
  The equality~\eqref{jacobian equals iterated integral} will be shown in
  Lemma~\ref{jacobian equals iterated integral lemma}. Then, using the inductive
  definition of the iterated integrals, we will show in
  Lemma~\ref{size of iterated integrals lemma} that
  \begin{equation*}
    |\mathfrak{I}_{n}(t_{1}, \dots, t_{n})|
    \approx_{n} 2^{-k_{n}}v(t_{1}, \dots, t_{n}).
  \end{equation*}
  To that end, let
  \begin{equation*} 
    \mathfrak{I}_{1}(t_{1})
    = \frac{L_{n-2}(t_{1})L_{n}(t_{1})}{[L_{n-1}(t_{1})]^{2}}.
  \end{equation*}
  For \(2 \leq m \leq n\) and \(t_{1}, \dots, t_{m} \in I^{m}\), define
  \begin{equation} \label{I_m}
    \mathfrak{I}_{m}(t_{1}, \dots, t_{m})
    = \bigg(\prod_{j=1}^{m}
      \frac{L_{n-m-1}(t_{j}) L_{n-m+1}(t_{j})}{[L_{n-m}(t_{j})]^{2}}\bigg)
      \itint{t_{1}}{t_{2}}{t_{m-1}}{t_{m}}
        \mathfrak{I}_{m-1}(\vec{s})\mathrm{d}s_{1} \dots \mathrm{d}s_{m-1},
  \end{equation}
  with the convention that \(L_{0} = L_{-1} \equiv 1\). As mentioned, the proof of
  Proposition~\ref{geometric inequality truncation proposition} will be complete
  following the proofs of Lemmata~\ref{jacobian equals iterated integral lemma}
  and~\ref{size of iterated integrals lemma}.
  \begin{lemma} \label{jacobian equals iterated integral lemma}
    \(\mathfrak{I}_{n}\) defined in~\eqref{I_m}
    satisfies~\eqref{jacobian equals iterated integral}.
  \end{lemma}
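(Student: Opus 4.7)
The plan is to prove Lemma~\ref{jacobian equals iterated integral lemma} by induction on $m \in \{1,\dots,n\}$, identifying at each stage a closed-form expression for $\mathfrak{I}_m(t_1,\dots,t_m)$; at $m = n$ this expression will be $J_{\Phi_\zeta}$. The $n = 2$ case already reveals the mechanism: with the conventions $L_{-1} \equiv L_0 \equiv 1$, one has
\begin{equation*}
  \mathfrak{I}_2(t_1,t_2)
  = \zeta_1'(t_1)\,\zeta_1'(t_2)\int_{t_1}^{t_2}\frac{L_2(s)}{L_1(s)^2}\,ds,
\end{equation*}
and the integrand is the total derivative $\frac{d}{ds}(\zeta_2'/\zeta_1')$, so telescoping and expanding the prefactor by multilinearity produces $\det[\zeta'(t_1),\zeta'(t_2)]$ directly.

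The crucial algebraic ingredient for the general induction is the family of pointwise identities
\begin{equation*}
  \frac{L_{j-1}^{\zeta}(u)\,L_{j+1}^{\zeta}(u)}{\bigl[L_j^{\zeta}(u)\bigr]^2}
  = \frac{d}{du}\,\rho_j(u),
  \qquad 1 \leq j \leq n-1,
\end{equation*}
where $\rho_j(u) = M_j(u)/L_j^{\zeta}(u)$ and $M_j(u)$ is obtained from the determinant defining $L_j^{\zeta}(u)$ by replacing its last row (the derivatives of $\zeta_j$) with the derivatives of $\zeta_{j+1}$; in particular $\rho_1 = \zeta_2'/\zeta_1'$, and the case $j = 2$ (where $M_2 = \zeta_1'\zeta_3'' - \zeta_3'\zeta_1''$) can be verified by direct expansion and Jacobi-type manipulation. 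In the inductive step from level $m$ to level $m+1$, I would substitute the inductive closed form for $\mathfrak{I}_m$ into the definition of $\mathfrak{I}_{m+1}$ and apply the fundamental theorem of calculus to each of the $m$ newly introduced integrations, invoking the telescoping identity above (after, where needed, an integration by parts that exposes a factor $\rho_j'$). The prefactor $\prod_{j=1}^{m+1}\frac{L_{n-m-2}(t_j)\,L_{n-m}(t_j)}{L_{n-m-1}(t_j)^2}$ is tuned precisely so that, after collecting boundary contributions and invoking multilinearity of the determinant, the result at level $m+1$ is again of the prescribed inductive form.

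The principal obstacle is pinning down the $\rho_j$ explicitly for all $j$ and verifying that the chain of $n$ telescopings reassembles exactly $\det[\zeta'(t_1),\dots,\zeta'(t_n)]$ rather than some other linear combination of minors: sign tracking, integration-by-parts cross-terms between neighboring levels, and the interplay between the prefactors of $\mathfrak{I}_m$ and $\mathfrak{I}_{m+1}$ make this step bookkeeping-heavy. The underlying algebraic structure is the same as in the polynomial setting of~\cite{DendrinosWright}, and every identity invoked is pointwise and requires only $\zeta \in C^n(I)$, which is furnished in our context by the hypothesis $\gamma \in C^N$ with $N > d \geq n$.
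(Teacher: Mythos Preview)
Your derivative identity is correct and is the heart of the matter; it is the case $i=j+1$ of the Sylvester-type identity
\[
  \biggl(\frac{L_{\zeta_1\cdots\zeta_{j-1}\zeta_i}}{L_{\zeta_1\cdots\zeta_j}}\biggr)'
  = \frac{L_{\zeta_1\cdots\zeta_{j-1}}\,L_{\zeta_1\cdots\zeta_j\zeta_i}}{(L_{\zeta_1\cdots\zeta_j})^{2}},
  \qquad j < i \leq n.
\]
The gap is that a \emph{single} antiderivative $\rho_j$ per level is not enough once $n\geq 3$. Try $n=3$: after one telescoping you have $\mathfrak I_2(s_1,s_2)=\tfrac{L_2(s_1)L_2(s_2)}{L_1(s_1)^2L_1(s_2)^2}\bigl[\rho_2(s_2)-\rho_2(s_1)\bigr]$, and to integrate again you must recognize that $L_2/L_1^2=(\zeta_2'/\zeta_1')'$ \emph{and} $L_2\rho_2/L_1^2=M_2/L_1^2=(\zeta_3'/\zeta_1')'$, so that in fact
\[
  \mathfrak I_2(s_1,s_2)
  =\det\begin{pmatrix}(\zeta_2'/\zeta_1')'(s_1)&(\zeta_2'/\zeta_1')'(s_2)\\(\zeta_3'/\zeta_1')'(s_1)&(\zeta_3'/\zeta_1')'(s_2)\end{pmatrix}.
\]
Two antiderivatives appear at this level, not one, and in general the closed form for $\mathfrak I_m$ is an $m\times m$ determinant in $m$ distinct functions. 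No amount of integration by parts with the single $\rho_j$ you wrote down will manufacture the missing rows; you need the full family indexed by $i>j$, i.e.\ the identity above for every $i$, not just $i=j+1$.

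The paper organizes exactly this by introducing $f_{i,0}=\zeta_i$ and $f_{i,j}=f_{i,j-1}'/f_{j,j-1}'$, proving via Sylvester that $f_{i,j}'=L_{\zeta_1\cdots\zeta_{j-1}}L_{\zeta_1\cdots\zeta_j\zeta_i}/L_j^2$ (your $\rho_j$ is the diagonal entry $f_{j+1,j}$), and then applying Lemma~5.1 of~\cite{DendrinosWright}: an $l\times l$ determinant $\det(g_i(t_r))$ with $g_1$ nonvanishing equals $\prod_r g_1(t_r)$ times an $(l-1)$-fold iterated integral of the $(l-1)\times(l-1)$ determinant $\det\bigl((g_i/g_1)'(s_r)\bigr)$. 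Taking $g_i=f_{i,j-1}'$ unwinds one layer of~\eqref{I_m} and shows inductively that $\mathfrak I_{n-j+1}(t_1,\dots,t_{n-j+1})=\det\bigl(f_{i,j-1}'(t_r)\bigr)_{j\leq i\leq n}$; at $j=1$ this is $J_{\Phi_\zeta}$. So the missing conceptual step in your sketch is that the bookkeeping is not merely heavy but requires the whole double-indexed family and a determinant-reduction lemma, not just the diagonal $\rho_j$'s.
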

  \begin{proof}
    Define \(f_{i, 0} = \zeta_{i}\) for \(1 \leq i \leq n\), and for
    \(1 \leq j \leq i-1\) define
    \begin{equation*}
      f_{i, j} = \frac{f_{i, j-1}'}{f_{j, j-1}'}.
    \end{equation*}
    Assume for now that the denominator is always nonzero; this follows
    from~\eqref{geometric inequality lemma 4.1 application} and the
    condition~\eqref{size of L_j^zeta}. We will show that
    \begin{equation} \label{iterated integrals equal f's}
      \mathfrak{I}_{n-j+1}(t_{1}, \dots, t_{n-j+1})
      = \det\begin{pmatrix}
          f_{j, j-1}'(t_{1}) & \dots & f_{j, j-1}'(t_{n-j+1}) \\
          \vdots       &       & \vdots       \\
          f_{n, j-1}'(t_{1}) & \dots & f_{n, j-1}'(t_{n-j+1})
        \end{pmatrix}
    \end{equation}
    for all \(1 \leq j \leq n\). In particular, when \(j = 1\) we see that
    \begin{equation*}
      \mathfrak{I}_{n}(t_{1}, \dots, t_{n})
      = \det\begin{pmatrix}
          f_{1, 0}'(t_{1}) & \dots & f_{1, 0}'(t_{n}) \\
          \vdots       &       & \vdots       \\
          f_{n, 0}'(t_{1}) & \dots & f_{n, 0}'(t_{n})
        \end{pmatrix}
      = \det\begin{pmatrix}
          \zeta_{1}'(t_{1}) & \dots & \zeta_{1}'(t_{n}) \\
          \vdots       &       & \vdots       \\
          \zeta_{n}'(t_{1}) & \dots & \zeta_{n}'(t_{n})
        \end{pmatrix}
      = J_{\Phi_{\zeta}}(t_{1}, \dots, t_{n}).
    \end{equation*}
    The proof of~\eqref{iterated integrals equal f's} requires two ingredients.
    First, we need to write down the exact relationship between each \(f_{i, j}'\)
    and various derivatives of \(\zeta\). Second, we need an iterative way of writing
    the left-hand side of~\eqref{iterated integrals equal f's}.
    
    For the first ingredient, we will need to define auxiliary matrices
    \begin{equation*}
      L_{\zeta_{i_{1}}, \dots, \zeta_{i_{l}}}(t)
      = \det\begin{pmatrix}
        \zeta_{i_{1}}'(t) & \cdots & \zeta_{i_{l}}^{(l)}(t) \\
        \vdots            &        & \vdots                 \\
        \zeta_{i_{1}}'(t) & \cdots & \zeta_{i_{l}}^{(l)}(t)
      \end{pmatrix}.
    \end{equation*}
    If \(A\) is the \((j+1) \times (j+1)\) matrix defining
    \(L_{\zeta_{1} \dots \zeta_{j} \zeta_{i}}\), and if
    \([r_{1}, \dots, r_{j}; c_{1}, \dots, c_{j}]\) denotes the determinant of
    the matrix obtained from \(A\) by deleting rows \(r_{1}, \dots, r_{j}\) and
    columns \(c_{1}, \dots, c_{j}\), then an application of Sylvester's Determinant
    Identity (see~\cite{Bareiss}) gives
    \begin{equation*}
      [j, j+1; j, j+1] \cdot \det A
      = [j+1; j+1] \cdot [j; j]-[j+1; j] \cdot [j; j+1].
    \end{equation*}
    Unwinding all the definitions, we see that
    \begin{align*}
      [j, j + 1; j, j + 1] &= L_{\zeta_{1} \dots \zeta_{j - 1}},
      \\
      [j + 1; j + 1] &= L_{\zeta_{1} \dots \zeta_{j}},
      \\
      [j; j] &= (L_{\zeta_{1} \dots \zeta_{j - 1} \zeta_{i}})',
      \\
      [j + 1; j] &= (L_{\zeta_{1} \dots \zeta_{j}})', \text{ and}
      \\
      [j; j + 1] &= L_{\zeta_{1} \dots \zeta_{j - 1} \zeta_{i}}.
    \end{align*}
    Thus, we have
    \begin{equation*}
      L_{\zeta_{1} \dots \zeta_{j - 1}}
        \cdot L_{\zeta_{1} \dots \zeta_{j} \zeta_{i}}
      = L_{\zeta_{1} \dots \zeta_{j}}
        \cdot (L_{\zeta_{1} \dots \zeta_{j - 1} \zeta_{i}})'
        - (L_{\zeta_{1} \dots \zeta_{j}})'
        \cdot L_{\zeta_{1} \dots \zeta_{j - 1} \zeta_{i}}.
    \end{equation*}
    Since \(L_{\zeta_{1} \dots \zeta_{j}} = L_{j}^{\zeta}\) is bounded away from 0
    by~\eqref{size of L_j^zeta}, the above shows that
    \begin{equation*}
      \bigg(\frac{L_{\zeta_{1} \dots \zeta_{j - 1} \zeta_{i}}}
        {L_{\zeta_{1} \dots \zeta_{j}}}\bigg)'
      = \frac{L_{\zeta_{1} \dots \zeta_{j - 1}}
        L_{\zeta_{1} \dots \zeta_{j} \zeta_{i}}}
        {(L_{\zeta_{1} \dots \zeta_{j}})^{2}}.
    \end{equation*}
    Induction in \(i\) and \(j\) then gives
    \begin{equation} \label{geometric inequality lemma 4.1 application}
      f_{i, j}'
      = \bigg(\frac{f_{i, j-1}'}{f_{j, j-1}'}\bigg)'
      = \bigg(\frac{L_{\zeta_{1} \dots \zeta_{j - 1} \zeta_{i}}}
        {L_{\zeta_{1} \dots \zeta_{j}}}\bigg)'
      = \frac{L_{\zeta_{1} \dots \zeta_{j - 1}}
        L_{\zeta_{1} \dots \zeta_{j} \zeta_{i}}}
        {(L_{\zeta_{1} \dots \zeta_{j}})^{2}}.
    \end{equation}
    The second ingredient is covered by the following calculus lemma
    in~\cite{DendrinosWright}:
    \begin{lemma} [\cite{DendrinosWright} Lemma 5.1]
       Let \(\{g_{i}\}_{i = 1}^{l}\) be smooth functions on an open interval
       \(J \subset \mathbb{R}\) such that \(g_{1}\) never vanishes on \(J\).
       If \(f_{i} = \frac{g_{i}}{g_{1}}\), \(2 \leq i \leq l\), then for
      \((t_{1}, \dots, t_{l}) \in J^{l}\),
      \begin{equation*}
        \setlength\arraycolsep{2pt}
        \det\begin{pmatrix}
          g_{1}(t_{1}) & \dots & g_{1}(t_{l}) \\
          \vdots       &       & \vdots       \\
          g_{n}(t_{1}) & \dots & g_{n}(t_{l})
        \end{pmatrix}
        = \prod_{i = 1}^{l} g_{1}(t_{i})
          \int_{t_{1}}^{t_{2}} \dots \int_{t_{l-1}}^{t_{l}}
          \det\begin{pmatrix}
            f_{2}'(s_{1}) & \dots & f_{2}'(s_{l-1}) \\
            \vdots        &       & \vdots            \\
            f_{n}'(s_{1}) & \dots & f_{n}'(s_{l-1})
          \end{pmatrix}
          \mathrm{d} s_{1} \dots \mathrm{d} s_{l-1}.
      \end{equation*}
    \end{lemma}
    Using this lemma (noting that \(f_{j, j-1}' \neq 0\)), we have
    \begin{equation} \label{geometric inequality lemma 5.1 application}
      \begin{aligned}
        &\det\begin{pmatrix}
          f_{j, j-1}'(t_{1}) & \dots & f_{j, j-1}'(t_{n-j+1}) \\
            \vdots       &       & \vdots       \\
            f_{n, j-1}'(t_{1}) & \dots & f_{n, j-1}'(t_{n-j+1})
        \end{pmatrix}
        \\
        &= \prod_{i=1}^{n-j+1} f_{j, j-1}'(t_{i})
          \int_{t_{1}}^{t_{2}} \dots \int_{t_{n-j}}^{t_{n-j+1}}
          \det\begin{pmatrix}
            f_{j+1, j}'(s_{1}) & \dots & f_{j+1, j}'(s_{n-j}) \\
            \vdots        &       & \vdots            \\
            f_{n, j}'(s_{1}) & \dots & f_{n, j}'(s_{n-j})
          \end{pmatrix}
          \mathrm{d}s_{1} \dots \mathrm{d}s_{n-j}.
      \end{aligned}
    \end{equation}
    As in~\cite{DendrinosWright},
    combining~\eqref{geometric inequality lemma 4.1 application}
    and~\eqref{geometric inequality lemma 5.1 application} iteratively gives us the
    equality \eqref{iterated integrals equal f's}, thus proving the lemma.
  \end{proof}
  \begin{lemma} \label{size of iterated integrals lemma}
    Under the assumption~\eqref{size of L_j^zeta}, for \(1 \leq m \leq n\) we have
    \begin{equation} \label{size of iterated integrals}
      \mathfrak{I}_{m}(t_{1}, \dots, t_{m})
      \approx_{m} \pm2^{(m+1)k_{n-m}-mk_{n-m-1}-k_{n}}v(t_{1}, \dots, t_{m}),
    \end{equation}
    where \(k_{0} = k_{-1} = 0\). In particular,
    \begin{equation*}
    |\mathfrak{I}_{n}(t_{1}, \dots, t_{n})|
    \approx_{n} |2^{-k_{n}}v(t_{1}, \dots, t_{n})|.
    \end{equation*}
  \end{lemma}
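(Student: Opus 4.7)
The plan is induction on $m \in \{1, \ldots, n\}$. For the base case $m = 1$, the hypothesis~\eqref{size of L_j^zeta} applied to $\mathfrak{I}_1 = L_{n-2} L_n / L_{n-1}^2$ gives $\mathfrak{I}_1(t_1) \approx \pm 2^{2 k_{n-1} - k_{n-2} - k_n}$, and the sign is constant on $I$ because each $L_j^\zeta$ is continuous and bounded below in absolute value. With the conventions $k_0 = k_{-1} = 0$ and the empty product $v(t_1) = 1$, this matches the stated formula.

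For the inductive step, assume~\eqref{size of iterated integrals} at level $m-1$. The iterated integral in~\eqref{I_m} is taken over the polytope where $s_1 < \cdots < s_{m-1}$, so $v(\vec s) > 0$; by the inductive hypothesis, $\mathfrak{I}_{m-1}(\vec s)$ has a fixed sign there and magnitude comparable to $2^{m k_{n-m+1} - (m-1) k_{n-m} - k_n} v(\vec s)$. Pulling out this factor reduces the problem to the Vandermonde integration estimate
\begin{equation*}
F_m(t_1, \ldots, t_m) := \int_{t_1}^{t_2} \cdots \int_{t_{m-1}}^{t_m} v(s_1, \ldots, s_{m-1}) \, ds_1 \cdots ds_{m-1} \approx_m v(t_1, \ldots, t_m).
\end{equation*}
Multiplying through by the prefactor in~\eqref{I_m}, which by~\eqref{size of L_j^zeta} carries a fixed sign and magnitude $\approx 2^{m[2 k_{n-m} - k_{n-m-1} - k_{n-m+1}]}$, and summing the exponents gives $(m+1) k_{n-m} - m k_{n-m-1} - k_n$, closing the induction. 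The final \emph{in particular} conclusion follows by setting $m = n$.

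The Vandermonde integration estimate is the only nontrivial step, and I claim the exact identity $F_m = v(t_1, \ldots, t_m)/(m-1)!$ (up to the constant $C_d$ from~\eqref{vandermonde}). Expanding $v(\vec s)$ by the Leibniz formula and integrating in each $s_k$ separately realizes $F_m$ as $\det M$, with $M_{kj} = (t_{k+1}^j - t_k^j)/j$ for $1 \leq k, j \leq m-1$. Writing the $m \times m$ Vandermonde matrix $V = [t_k^{j-1}]$ with $\det V = v(t_1, \ldots, t_m)$, the row operations $R_k \leftarrow R_k - R_{k-1}$ for $k = 2, \ldots, m$ zero out the first column below the top row; a cofactor expansion along the first column then identifies $\det V$ with $(m-1)! \det M$, yielding the desired identity. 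The main obstacle is this identity; the remainder of the argument is exponent bookkeeping and a sign argument using the continuity of the $L_j^\zeta$'s.
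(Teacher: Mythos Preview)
Your proof is correct and follows the same inductive scheme as the paper. The only substantive difference is in the Vandermonde integration identity \(\int_{t_1}^{t_2}\cdots\int_{t_{m-1}}^{t_m} v(\vec s)\,d\vec s = c_m\,v(t_1,\ldots,t_m)\): the paper argues that the integral is a homogeneous polynomial of degree \(\tfrac{m(m-1)}{2}\) that vanishes whenever \(t_i=t_j\), hence is a constant multiple of \(v\); you instead compute the integral directly as a determinant and identify it with the Vandermonde via row operations, obtaining the explicit constant \(c_m = 1/(m-1)!\). Your route is slightly more hands-on but has the virtue of not needing to justify the vanishing at non-adjacent coincidences \(t_i=t_j\), which the paper asserts without comment. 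Two minor imprecisions worth tightening: the domain of integration is the box \(\prod_k[t_k,t_{k+1}]\) rather than literally the ordered simplex (though for \(t_1\le\cdots\le t_m\) the ordering \(s_1\le\cdots\le s_{m-1}\) does hold on that box, which is what you need); and the row operations \(R_k\leftarrow R_k-R_{k-1}\) should be carried out from \(k=m\) down to \(k=2\) so that each \(R_{k-1}\) is still the original row.
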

  \begin{proof}
    We proceed by induction. In the base case \(m = 1\), the Vandermonde determinant
    \(v(t)\) is simply the constant function 1. Furthermore,
    \begin{equation*}
      \mathfrak{I}_{1}(t_{1})
      = \frac{L_{n-2}^{\zeta}(t_{1})L_{n}^{\zeta}(t_{1})}
        {[L_{n-1}^{\zeta}(t_{1})]^{2}}.
    \end{equation*}
    For every \(t_{1}\), the assumption~\eqref{size of L_j^zeta} shows
    \begin{equation*}
      |\mathfrak{I}_{1}(t_{1})|
      \approx 2^{-k_{n-2}-k_{n}+2k_{n-1}},
    \end{equation*}
    so the base case is complete. In the inductive step,
    assume that~\eqref{size of iterated integrals} holds for some
    \(1 \leq m-1 \leq n\). From the definition of \(\mathfrak{I}_{m}\)
    in~\eqref{I_m} and the condition~\eqref{size of L_j^zeta},
    \begin{align*}
      \mathfrak{I}_{m}(t_{1}, \dots, t_{m})
      &\approx_{m} \pm\bigg(\prod_{j=1}^{m} 2^{2k_{n-m}-k_{n-m-1}-k_{n-m+1}}\bigg)
        \itint{t_{1}}{t_{2}}{t_{m-1}}{t_{m}}
        \mathfrak{I}_{m-1}(s_{1}, \dots, s_{m-1})
        \mathrm{d}s_{1} \dots \mathrm{d}s_{m-1}
      \\
      &= \pm2^{2mk_{n-m}-mk_{n-m-1}-mk_{n-m+1}}\itint{t_{1}}{t_{2}}{t_{m-1}}{t_{m}}
        \mathfrak{I}_{m-1}(s_{1}, \dots, s_{m-1})
        \mathrm{d}s_{1} \dots \mathrm{d}s_{m-1}.
    \end{align*}
    By the induction hypothesis,
    \begin{equation*}
      \mathfrak{I}_{m}(t_{1}, \dots, t_{m})
      \approx_{m} \pm2^{(m+1)k_{n-m}-mk_{n-m-1}-k_{n}}
        \itint{t_{1}}{t_{2}}{t_{m-1}}{t_{m}}
        v(s_{1}, \dots, s_{m-1}) \mathrm{d}s_{1} \dots \mathrm{d}s_{m-1}.
    \end{equation*}
    The integrand is a homogeneous polynomial of degree
    \(\frac{(m-1)(m-2)}{2}\). Thus, the integral is a homogeneous polynomial
    of degree \(\frac{(m-1)(m-2)}{2}+m-1 = \frac{m(m - 1)}{2}\). Hence, there is
    some polynomial \(P\) such that
    \begin{align*}
      \itint{t_{1}}{t_{2}}{t_{m-1}}{t_{m}}
        v(s_{1}, \dots, s_{m-1}) \mathrm{d}s_{1} \dots \mathrm{d}s_{m-1}
      &= P(t_{1}, \dots, t_{m}) \prod_{1 \leq i < j \leq m} (t_{j} - t_{i})
      \\
      &= P(t_{1}, \dots, t_{m}) v(t_{1}, \dots, t_{m}).
    \end{align*}
    Moreover, for any \(1 \leq i < j \leq m\), the integral is 0 whenever
    \(t_{j} = t_{i}\). Since \(v(t_{1}, \dots, t_{m})\) already has degree
    \(\frac{m(m-1)}{2}\), \(P\) must be a constant, so
    \begin{equation*}
      \mathfrak{I}_{m}(t_{1}, \dots, t_{m})
      \approx_{m} \pm2^{(m+1)k_{n-m}-mk_{n-m-1}-k_{n}}v(t_{1}, \dots, t_{m}).
    \end{equation*}
    This closes the induction and finishes the proof of the lemma.
  \end{proof}
%
  \printbibliography
\end{document}